\theoremstyle{plain}
\newtheorem{thm}{Theorem}[section]
\newtheorem{prp}[thm]{Proposition}
\newtheorem{lem}[thm]{Lemma}
\newtheorem{cor}[thm]{Corollary}
\newtheorem{rmk}{Remark}[section]
\numberwithin{equation}{section}
\newcommand{\R}{\mathbb{R}}
\newcommand{\C}{\mathbb{C}}
\newcommand{\pa}{\partial}
\newcommand{\eps}{\varepsilon}
\newcommand{\uT}{\underline{T}}
\newcommand{\op}[1]{\mathcal{#1}}
\DeclareMathOperator{\imagpart}{\rm Im}
\newcommand{\dis}{\displaystyle}
\begin{document}
\title{\Large 
A sharp lower bound for the lifespan of small solutions to the 
Schr\"odinger equation with a subcritical power nonlinearity 
\\
}

\author{
          Yuji Sagawa \thanks{
              Department of Mathematics, Graduate School of Science, 
              Osaka University. 
              1-1 Machikaneyama-cho, Toyonaka, Osaka 560-0043, Japan. 
              E-mail: {\tt y-sagawa@cr.math.sci.osaka-u.ac.jp}
             }
           \and  
          Hideaki Sunagawa \thanks{
              E-mail: {\tt sunagawa@math.sci.osaka-u.ac.jp}
             }
           \and  
          Shunsuke Yasuda \thanks{
              E-mail: {\tt s-yasuda@cr.math.sci.osaka-u.ac.jp}
             }
}

\date{\today }   
\maketitle

\noindent{\bf Abstract:}\ 
Let  $T_{\eps}$ be the lifespan for the solution 
to the Schr\"odinger equation on $\R^d$ with a power nonlinearity 
$\lambda |u|^{2\theta/d}u$ ($\lambda \in \C$, $0<\theta<1$) and the initial 
data in the form $\eps \varphi(x)$. We provide a sharp lower bound estimate 
for $T_{\eps}$ as $\eps \to +0$ which can be written explicitly by 
$\lambda$, $d$, $\theta$, $\varphi$ and $\eps$. 
This is an improvement of the previous result by H.~Sasaki 
[Adv. Diff. Eq. {\bf 14} (2009), 1021--1039].
\\

\noindent{\bf Key Words:}\ 
subcritical nonlinear Schr\"odinger equation, lifespan, detailed lower bound.
\\

\noindent{\bf 2010 Mathematics Subject Classification:}\ 
35Q55, 35B40.\\

\section{Introduction}  \label{sec_intro}
\subsection{Backgrounds}  \label{subsec_backgrounds}
We consider the following initial value problem: 
\begin{align}
\left\{\begin{array}{cl}
 \dis{i\pa_t u+\frac{1}{2}\pa_x^2 u=\lambda |u|^{p-1}u}, & t>0, \ x\in \R,\\
 u(0,x)=\eps \varphi(x), &x\in \R,
\end{array}\right.
\label{nls}
\end{align}
where $i=\sqrt{-1}$, $\lambda\in \C$ and $p>1$. 
$\varphi$ is a prescribed $\C$-valued function which belongs to 
a suitable weighted Sobolev space, and $\eps>0$ is a small parameter 
which is responsible for the size of the initial data. 
We are interested in the lifespan $T_{\eps}$ for the solution $u=u(t,x)$ 
to \eqref{nls} in the case of $p<3$ and $\imagpart \lambda>0$. 
Before going into details, let us summarize the backgrounds briefly. 

First we consider the simpler case $p>3$. In this case, small data 
global existence for \eqref{nls} is well-known. Moreover, the solution behaves 
like the free solution in the large time. On the other hand, when $p\le 3$, 
non-existence of asymptotically free solution has been shown in 
\cite{Str}, \cite{Ba}. 
Roughly speaking, the critical exponent $p=3$ comes from the condition for 
convergence of the integral 
\[
 \int_1^{\infty} \frac{dt}{t^{(p-1)/2}}.
\]
Note that this threshold becomes $p=1+2/d$ in the $d$-dimensional settings. 
Next let us turn our attention to the case $p\le 3$. 
In \cite{HN}, it has been shown that the solution to \eqref{nls} with 
$p=3$ and $\lambda\in \R$ behaves like
\[
 u(t,x)=\frac{1}{\sqrt{it}} \alpha(x/t) 
 e^{i\{x^2/(2t)  - \lambda |\alpha(x/t)|^2 \log t \}}
 +o(t^{-1/2})
\quad \mbox{in}\ L^{\infty}(\R_x)
\]
as $t\to \infty$ with a suitable $\C$-valued function $\alpha$ satisfying 
$\|\alpha\|_{L^{\infty}} \le C\eps$. 
An important consequence of this asymptotic expression
is that the solution decays like $O(t^{-1/2})$ in $L^{\infty}(\R_x)$, 
while it does not behave like the free solution unless $\lambda = 0$. 
In other words, the additional logarithmic factor in the phase reflects the
long-range character of the cubic nonlinear Schr\"odinger equations in one 
space dimension.
This result has been extended in \cite{HKN} to the case where 
$p$ is less than and sufficiently close to $3$. 
When $\lambda\in \C$, the situation changes slightly. 
Indeed, it has been verified in \cite{Shim} that the
small data solution to \eqref{nls} decays like $O(t^{-1/2}(\log t)^{-1/2})$ 
in $L^{\infty}(\R_x)$  as $t \to \infty$ if $p=3$ and $\imagpart \lambda <0$. 
This gain of additional logarithmic time decay should be interpreted as 
another kind of long-range effect (see also \cite{Su2} for a closely related 
result for the Klein-Gordon equation). The above-mentioned result has been 
extended in \cite{KitaShim1}, \cite{KitaShim2}, \cite{HLN}, \cite{JJL}, etc., 
to the case $p<3$ and $\imagpart \lambda <0$. 
However, it should be noted that these results essentially rely on 
the a priori $L^2$-bound for the solution $u$ coming from the conservation 
law
\[
  \|u(t,\cdot)\|_{L^2}^2 
  - 2\imagpart \lambda \int_0^t\|u(\tau,\cdot)\|_{L^{p+1}}^{p+1}d\tau
=
\|u(0,\cdot)\|_{L^2}^2, 
\]
which is valid only when $\imagpart \lambda \le 0$. 
In what follows, we focus on the remaining case  $p\le 3$ and 
$\imagpart \lambda>0$. This is the worst situation for global existence 
because the nonlinearity must be considered as a long-range perturbation and 
the a priori $L^2$-bound for $u$ is violated. 
To the authors' best knowledge, there is no positive result in that case.
As for the lifespan $T_{\eps}$, the standard perturbative argument 
yields a lower estimate in the form 
\[
T_{\eps} \geq
\left\{\begin{array}{cl}
e^{C/\eps^{2}} & (\mbox{when}\ p=3) \\
C\eps^{-2(p-1)/(3-p)} & (\mbox{when}\ 1<p<3)
\end{array}\right.
\]
with some $C>0$, provided that $\eps$ is suitably small 
(see Section~\ref{sec_rough} below for more detail). 
In other words, we have
\[
 \liminf_{\eps \to +0} 
 \int_1^{T_{\eps}}\left(\frac{\eps}{t^{1/2}}\right)^{p-1} dt
 >0.
\]
However, this estimate does not tell us the dependence of $T_{\eps}$ 
on $\imagpart \lambda$. So we are led to the question: 
{\em how does $T_{\eps}$ depend on $\imagpart \lambda$?} 
In the cubic case, two of the authors have derived 
the following more precise estimate for $T_{\eps}$ in 
the previous papers \cite{Su}, \cite{SaSu}: 
\[
 \liminf_{\eps \to +0} (\eps^2 \log T_{\eps})
 \ge 
 \frac{1}{\dis{2\imagpart \lambda \sup_{\xi \in \R} |\hat{\varphi}(\xi)|^2}},
\]
where
\[
 \hat{\varphi}(\xi)
 = 
 \frac{1}{\sqrt{2\pi}} \int_{\R} e^{-iy\xi} \varphi(y)\, dy,
\quad \xi \in \R.
\]
This gives an answer to the question raised above for the cubic case. 
In fact, more general cubic nonlinear terms depending also on $\pa_x u$ have 
been treated in \cite{Su}, \cite{SaSu} (see also \cite{MP} for a 
related work). 
When $p<3$ and $\imagpart \lambda>0$, the situation is the most delicate and 
quite little is known so far. To the authors' knowledge, 
there is only one result which concerns the dependence of $T_{\eps}$ 
on $\imagpart \lambda$ in the case of $p<3$: 
\begin{prp}[Sasaki~\cite{Sas}] \label{prp_prev} 
Assume $2\le p<3$, $\imagpart \lambda >0$ and $(1+x^2)\varphi \in \Sigma$. 
Let $T_{\eps}$ be the supremum of $T>0$ such that \eqref{nls} admits a 
unique solution $u\in C([0,T); \Sigma)$. Then we have
\[
 \liminf_{\eps\to +0} \left( \eps^{2(p-1)/(3-p)} {T_{\eps}}\right)
 \ge 
 \left(\frac{3-p}
 {\dis{2(p-1)\imagpart \lambda \sup_{\xi \in \R} 
   |\hat{\varphi}(\xi)|^{p-1} }}\right)^{2/(3-p)},
\]
where 
$\Sigma=\{f\in L^2(\R)\, |\, \|f\|_{\Sigma}<\infty\}$ with 
$\|f\|_{\Sigma}=\|f\|_{L^2}+\|\pa_x f\|_{L^2}+\|xf\|_{L^2}$.
\end{prp}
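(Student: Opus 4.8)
The plan is to reduce the equation to an ordinary differential equation along the rays $x=\xi t$ by the standard factorization of the free Schr\"odinger group, to solve the resulting model ODE explicitly --- it blows up in finite time, and its blow-up time is exactly the asserted value of $T_\eps$ --- and then to run a continuity argument that keeps the true solution close to the model up to any time strictly below the model blow-up time, letting that time increase to the model blow-up time only at the end. Concretely, write $L=i\pa_t+\tfrac12\pa_x^2$, let $\op{J}(t)=x+it\pa_x=e^{it\pa_x^2/2}xe^{-it\pa_x^2/2}$ (so $[L,\op{J}(t)]=0$, hence $L(\op{J}u)=\op{J}(\lambda|u|^{p-1}u)$), use the factorization $e^{it\pa_x^2/2}=M(t)D(t)\op{F}M(t)$ with $M(t)$ the multiplication by $e^{ix^2/(2t)}$, $(D(t)f)(x)=(it)^{-1/2}f(x/t)$ and $\op{F}$ the Fourier transform, and put $v(t,\xi)=\op{F}[e^{-it\pa_x^2/2}u(t,\cdot)](\xi)$, so that $\|v(t)\|_{L^2}=\|u(t)\|_{L^2}$, $\|\pa_\xi^k v(t)\|_{L^2}=\|\op{J}(t)^k u(t)\|_{L^2}$, and
\[
 u(t,x)=(it)^{-1/2}e^{ix^2/(2t)}v(t,x/t)+r(t,x),\qquad
 \|r(t)\|_{L^\infty_x}\le C\,t^{-1/2-\gamma}\,\|v(t)\|_{H^2_\xi}
\]
for some $\gamma=\gamma(p)$ which may be chosen with $(3-p)/2<\gamma<3/4$ (the Hayashi--Naumkin-type factorization lemma; here the bound on $\|\pa_\xi^2 v(t)\|_{L^2}=\|\op{J}(t)^2 u(t)\|_{L^2}$, finite initially because $(1+x^2)\varphi\in\Sigma$, is what upgrades the error from $O(t^{-1/2})$ to $O(t^{-1/2-\gamma})$). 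Since $|u|^{p-1}u$ is gauge invariant, the phase $e^{ix^2/(2t)}$ is left untouched by the nonlinearity, and one obtains the profile equation
\[
 i\pa_t v(t,\xi)=\lambda\,t^{-(p-1)/2}|v(t,\xi)|^{p-1}v(t,\xi)+\op{R}(t,\xi),\qquad
 \|\op{R}(t)\|_{L^\infty_\xi}\le C\,t^{-(p-1)/2-\gamma}\,\Phi\bigl(\|v(t)\|_{H^2_\xi}\bigr),
\]
with $\Phi$ a polynomial vanishing at $0$; here $p\ge 2$ enters, since it makes $z\mapsto|z|^{p-1}z$ of class $C^2$ with $|\nabla^2(|z|^{p-1}z)|\lesssim|z|^{p-2}$ bounded, so that the nonlinearity can be Taylor-expanded against $r$ and differentiated twice in the energy estimates.

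Next I would isolate the model. Taking moduli in the profile equation gives $\pa_t(|v(t,\xi)|^{-(p-1)})=-(p-1)(\imagpart\lambda)\,t^{-(p-1)/2}+G(t,\xi)$ with $|G(t,\xi)|\le(p-1)|v(t,\xi)|^{-p}|\op{R}(t,\xi)|$; dropping $G$, and using that $1<p<3$ makes $\int_1^t s^{-(p-1)/2}\,ds=\tfrac{2}{3-p}(t^{(3-p)/2}-1)$ diverge, one gets the explicitly solvable model $W(t)^{-(p-1)}=W(1)^{-(p-1)}-(p-1)(\imagpart\lambda)\tfrac{2}{3-p}(t^{(3-p)/2}-1)$, which blows up at the time $T_*$ with $(p-1)(\imagpart\lambda)\tfrac{2}{3-p}(T_*^{(3-p)/2}-1)=W(1)^{-(p-1)}$. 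A short-time analysis on $[0,1]$ (one Duhamel iteration, using $(1+x^2)\varphi\in\Sigma$) gives $\|u(1)\|_\Sigma+\|\op{J}(1)^2 u(1)\|_{L^2}\le C\eps$ and $v(1,\xi)=\eps\hat\varphi(\xi)+O(\eps^{p})$ in $L^\infty_\xi$, hence $\inf_\xi|v(1,\xi)|^{-(p-1)}=(1+o(1))\,a$ as $\eps\to+0$ with $a:=\eps^{-(p-1)}(\sup_\xi|\hat\varphi(\xi)|^{p-1})^{-1}$; taking $W(1)^{-(p-1)}=a$ in the model reproduces exactly the value of $T_\eps$ in the statement. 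In parallel, the usual energy identities give $\tfrac{d}{dt}\op{E}(t)\le C\|u(t)\|_{L^\infty}^{p-1}\op{E}(t)$ for $\op{E}(t):=\|u(t)\|_{L^2}^2+\|\pa_x u(t)\|_{L^2}^2+\|\op{J}(t)u(t)\|_{L^2}^2+\|\op{J}(t)^2 u(t)\|_{L^2}^2$ (note $\|v(t)\|_{H^2_\xi}\lesssim\op{E}(t)^{1/2}$), together with $\|u(t)\|_{L^\infty}\le t^{-1/2}\|v(t)\|_{L^\infty}+\|r(t)\|_{L^\infty_x}$, so these quantities are controlled by $\exp(C\int_1^t\|u\|_{L^\infty}^{p-1})$ once $\|v\|_{L^\infty}$ is.

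The proof is completed by a continuity argument. Fix a small $\delta>0$, define $T_\delta$ by $(p-1)(\imagpart\lambda)\tfrac{2}{3-p}(T_\delta^{(3-p)/2}-1)=(1-\delta)a$, and set $h(t)=(p-1)(\imagpart\lambda)\tfrac{2}{3-p}(t^{(3-p)/2}-1)$, so $h(t)\le(1-\delta)a$ on $[1,T_\delta]$. On $[1,T]$ with $1<T\le T_\delta$ I would make the bootstrap assumptions $\inf_\xi|v(t,\xi)|^{-(p-1)}\ge\tfrac{\delta}{2}a$ and $\op{E}(t)^{1/2}\le\eps\,\delta^{-C_0}$, and improve them. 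The first gives $\|v(t)\|_{L^\infty}^{p-1}\le(a-h(t)-o(a))^{-1}$, hence $\int_1^{T_\delta}\|u(t)\|_{L^\infty}^{p-1}\,dt\le\tfrac{1}{(p-1)\imagpart\lambda}\log\tfrac1\delta+C$, which fed into the energy Gronwall inequality improves the second assumption once $C_0$ is large enough. Then, since $\gamma>(3-p)/2$ and $T_\delta\sim c_\delta\,\eps^{-2(p-1)/(3-p)}$, one has $\int_1^{T_\delta}\|\op{R}(t)\|_{L^\infty_\xi}\,dt\lesssim\eps\,\delta^{-C_0}\,T_\delta^{(3-p)/2-\gamma}=o(\eps)$; comparing $|v(t,\xi)|$ with the solution of $\pa_s\tilde V=(\imagpart\lambda)\,s^{-(p-1)/2}\tilde V^{p}+|\op{R}(s,\xi)|$, $\tilde V(1,\xi)=|v(1,\xi)|$, shows that at frequencies where $|v(1,\xi)|$ is comparable to $\eps\sup_\xi|\hat\varphi(\xi)|$ the perturbation $(p-1)|v(1,\xi)|^{-p}\int_1^t\|\op{R}\|\,ds$ is $o(a)$, so $\tilde V$ (hence $v$) does not blow up before $T_\delta$ and $|v(t,\xi)|^{-(p-1)}\ge a-h(t)-o(a)\ge\tfrac34\delta a$ there, while at the remaining frequencies (including zeros of $\hat\varphi$) the model term in $\pa_t v$ is too small to matter and $|v(t,\xi)|\le C\eps$, so $|v(t,\xi)|^{-(p-1)}\gtrsim\eps^{-(p-1)}\gtrsim a\gg\tfrac{\delta}{2}a$. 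This improves the first assumption, so the solution extends past $T_\delta$, i.e.\ $T_\eps\ge T_\delta$; therefore $\liminf_{\eps\to+0}\eps^{2(p-1)/(3-p)}T_\eps\ge\left(\frac{(1-\delta)(3-p)}{2(p-1)\,\imagpart\lambda\,\sup_\xi|\hat\varphi(\xi)|^{p-1}}\right)^{2/(3-p)}$, and letting $\delta\to+0$ gives the claim.

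The step I expect to be the main obstacle is this last one: one must estimate $\op{R}(t,\xi)$ uniformly in $\xi$ up to $T_\delta$, at which the weighted Sobolev norms of $u$ are already large (they grow like a negative power of $a-h(t)$ as $t\uparrow T_*$), and verify that the loss thereby incurred --- the powers of $\delta^{-1}$ from the Gronwall factor, and the factor $|v(t,\xi)|^{-1}$ appearing when $|v(t,\xi)|$ is small at a high frequency --- is beaten by the decay $t^{-\gamma}$ with $\gamma>(3-p)/2$ together with the merely polynomial size $\eps^{-2(p-1)/(3-p)}$ of $T_\delta$. This is exactly why one stops at $T_\delta$ and sends $\delta\to0$ only at the end, and why the hypotheses $p\ge2$ and $(1+x^2)\varphi\in\Sigma$ are convenient: the former supplies the $C^2$ nonlinearity needed for the Taylor expansion and the second-order energy estimates, the latter the second-moment bound that both makes the factorization error decay faster than $t^{-1/2}$ and provides the required control of $\|\op{J}(t)^2 u\|_{L^2}$.
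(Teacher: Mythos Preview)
The paper does not supply its own proof of this proposition: it is quoted from Sasaki~\cite{Sas} as prior work, and the paper merely \emph{describes} Sasaki's method in the paragraph following the remarks. That method is quite different from yours: Sasaki first constructs an explicit approximate solution $u_a$ that blows up at the expected time, and then estimates the difference $u-u_a$ rather than $u$ itself. The paper explains that this two-step scheme is what forces the restrictions $p\ge 2$ and $(1+x^2)\varphi\in\Sigma$ in Proposition~\ref{prp_prev}.

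Your proposal does \emph{not} follow Sasaki's route. Instead, you reduce directly to the profile ODE for $v=\op{F}\op{U}(t)^{-1}u$ and run a bootstrap against the explicit solution of the unperturbed model equation---and this is precisely the strategy the present paper introduces to prove its main Theorem~\ref{thm_main}, which supersedes Proposition~\ref{prp_prev}. So you have, in effect, rediscovered the paper's improved argument rather than reproduced Sasaki's original one. The main structural differences between your sketch and the paper's execution are: (i) you control the remainder via $\|\op{J}(t)^2u\|_{L^2}$ and hence genuinely use the hypothesis $(1+x^2)\varphi\in\Sigma$, whereas the paper works in the fractional spaces $\Sigma^s$ with $d/2<s<\min\{2,p\}$, which is what allows it to drop that hypothesis and to reach $1<p<2$; (ii) you track the energy growth through the sharp logarithmic integral $\int \|u\|_{L^\infty}^{p-1}\,dt\lesssim\log(1/\delta)$ and a $\delta^{-C_0}$-type bootstrap, while the paper packages the comparison with the model ODE into a clean self-contained lemma (Lemma~\ref{lem_ODE}) that yields $|A(t,\xi)|\le (C_0+1)\eps$ uniformly in $\xi$ without splitting frequencies into ``large'' and ``small'' $|\hat\varphi(\xi)|$; this sidesteps the part you flagged as the main obstacle. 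Your frequency dichotomy can be made to work, but the uniform ODE lemma is cleaner and is exactly what lets the paper avoid the $C^2$ regularity of $z\mapsto|z|^{p-1}z$ that you invoke for $p\ge 2$.
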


The aim of this paper is to improve Proposition~\ref{prp_prev} regarding 
the following three points: 
\begin{itemize}
\item
 to extend the admissible value of $p$ to the full range $1<p<3$, 
\item
 to relax the decay assumption on $\varphi(x)$ as $|x|\to \infty$, 
\item
 to give a higher dimensional generalization. 
\end{itemize}

\subsection{Main result}  \label{sec_main}
In what follows, we consider a $d$-dimensional generalization of 
\eqref{nls}. For the notational convenience, we write the power $p$ 
of the nonlinearity as $p=1+2\theta/d$ so that the condition $1<p<1+2/d$ 
is interpreted as $0<\theta <1$. 
Then we are led to the following initial 
value problem: 
\begin{align}
\left\{\begin{array}{cl}
 \dis{i\pa_t u+\frac{1}{2}\Delta u=\lambda |u|^{2\theta/d}u}, 
 & t>0, \ x\in \R^d,\\
 u(0,x)=\eps \varphi(x), &x\in \R^d,
\end{array}\right.
\label{nls_d}
\end{align}
where $\Delta=(\pa/\pa x_1)^2+\cdots+(\pa/\pa x_d)^2$  for 
$x=(x_1,\ldots,x_d)\in \R^d$. To state the main result, let us introduce 
some notations. For $s$, $\sigma\ge 0$, we denote by $H^{s,\sigma}$ 
the weighted Sobolev spaces  
\[
 H^{s,\sigma}:=\bigl\{f\in L^2(\R^d)\, \bigm|\,  
(1+|x|^2)^{\sigma/2} (1-\Delta)^{s/2} f \in L^2(\R^d) \bigr\}
\]
equipped with the norm 
\[
\|f\|_{H^{s,\sigma}}:=\|(1+|x|^2)^{\sigma/2} (1-\Delta)^{s/2} f\|_{L^2}. 
\]
We also define $\Sigma^{s}:=H^{s,0}\cap H^{0,s}$ with the norm 
$\|f\|_{\Sigma^{s}}:=\|f\|_{H^{s,0}}+\|f\|_{H^{0,s}}$. 
We set $\op{U}(t):=\exp(\frac{it}{2}\Delta)$ so that the solution $v$ 
to the free Schr\"odinger equation 
\[i\pa_t v+\frac{1}{2}\Delta v=0, \quad 
v(0,x)=\phi(x)
\] 
can be written as $v(t)=\op{U}(t)\phi$. 
The main result is as follows. 
\begin{thm} \label{thm_main}
Let $1\le d\le 3$, 
$0<\theta<1$ and $\lambda\in \C$ with $\imagpart \lambda >0$. 
Assume 
\begin{align}
d/2<s<\min\{2,1+2\theta/d\}
\label{index}
\end{align} 
and $\varphi \in \Sigma^{s}$. 
Let $T_{\eps}$ be the supremum of $T>0$ such that \eqref{nls_d} admits a 
unique solution $u$ satisfying  $\op{U}(\cdot)^{-1}u\in C([0,T); \Sigma^{s})$. 
Then we have
\begin{align}
 \liminf_{\eps\to +0} \left( \eps^{2\theta/d} {T_{\eps}}^{1-\theta}\right)
 \ge 
 \frac{(1-\theta)d}
 {\dis{2\theta \imagpart \lambda \sup_{\xi \in \R^d} 
   |\hat{\varphi}(\xi)|^{2\theta/d}}},
\label{goal}
\end{align}
where 
\[
 \hat{\varphi}(\xi)=\op{F}\varphi(\xi)
 := 
 \frac{1}{(2\pi)^{d/2}} \int_{\R^d} e^{-iy\cdot \xi} \varphi(y)\, dy,
\quad \xi \in \R^d.
\]
\end{thm}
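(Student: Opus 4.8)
The plan is to use the factorization $\op{U}(t) = M(t) D(t) \op{F} M(t)$ of the free Schr\"odinger group (where $M(t)$ is multiplication by $e^{i|x|^2/(2t)}$ and $D(t)$ is the $L^2$-unitary dilation), which reduces the analysis of the nonlinear equation \eqref{nls_d} to an ordinary differential equation for the profile along the rays $x = \xi t$. Writing $\alpha(t,\xi)$ for (an approximation of) $\op{F}\op{U}(-t)u$, the nonlinearity $\lambda |u|^{2\theta/d}u$ translates, after the stationary-phase/MDFM reduction, into $\lambda t^{-\theta} |\alpha|^{2\theta/d}\alpha$ plus remainder terms that are integrable in $t$ thanks to the hypothesis $s > d/2$ (so that $\hat\varphi$, and hence $\alpha$, is bounded and the Sobolev/weighted norms control the error). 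The key scalar ODE is then $i\pa_t \alpha = \lambda t^{-\theta}|\alpha|^{2\theta/d}\alpha$, for which $\pa_t(|\alpha|^2) = -2(\imagpart\lambda)\, t^{-\theta}|\alpha|^{2+2\theta/d}$; integrating this Bernoulli-type equation from the initial value $|\alpha(1,\xi)|\approx \eps|\hat\varphi(\xi)|$ gives
\[
 |\alpha(t,\xi)|^{-2\theta/d}
 \geq (\eps|\hat\varphi(\xi)|)^{-2\theta/d}
   - \frac{2\theta\,\imagpart\lambda}{d}\cdot\frac{t^{1-\theta}-1}{1-\theta},
\]
so the solution stays bounded (and the a priori estimate closes) as long as the right-hand side is positive, i.e. roughly as long as $\frac{2\theta\,\imagpart\lambda}{d(1-\theta)}\,t^{1-\theta} < \eps^{-2\theta/d}|\hat\varphi(\xi)|^{-2\theta/d}$ uniformly in $\xi$, which is exactly the threshold appearing in \eqref{goal}.

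Concretely, the steps I would carry out are: (i) set up the function space $X_T$ of profiles with $\op{U}(\cdot)^{-1}u \in C([0,T);\Sigma^s)$ together with an $L^\infty$-type weight capturing $\|\alpha(t)\|_{L^\infty} \lesssim $ the ODE bound above, and recall the local existence / blow-up alternative that identifies $T_\eps$ with the maximal time on which such a bound persists; (ii) derive the approximate ODE for $\alpha(t,\xi) := \op{F}\op{U}(-t)u(t,\xi)$ by commuting $\op{F}\op{U}(-t)$ through the equation and estimating the difference between $|u|^{2\theta/d}u$ and its leading profile $t^{-\theta}|\alpha|^{2\theta/d}\alpha$ in $\Sigma^{s-\text{something}}$ — here one uses the standard $L^\infty$-decay $\|u(t)\|_{L^\infty}\lesssim t^{-d/2}\|\op{U}(-t)u\|_{H^{0,s}}^{\text{(with }s>d/2)}$, fractional Leibniz/chain rules for the nonlinearity of subcritical power $1+2\theta/d$ (valid since $s < 1+2\theta/d$ keeps the nonlinearity differentiable enough), and the constraint $s < 2$ to control $\|x \op{U}(-t)(\cdots)\|$; (iii) perform a bootstrap: assuming $\|\alpha(t)\|_{L^\infty}$ and $\|\op{U}(-t)u(t)\|_{\Sigma^s}$ are controlled by (a constant times) the ODE solution plus a slowly growing factor on $[1,T]$, show that the remainder terms are integrable and improve the constants, so the assumed bound is in fact valid up to the ODE threshold time; (iv) for $\eps\to+0$, track the constants carefully: the initial discrepancy $\|\alpha(1)-\eps\hat\varphi\|$ and the accumulated remainder are both $o(\eps^{2\theta/d}\,T^{1-\theta})$-small relative to the main term, which is what lets the $\liminf$ attain the sharp constant $\frac{(1-\theta)d}{2\theta\,\imagpart\lambda\,\sup|\hat\varphi|^{2\theta/d}}$ rather than just some positive constant.

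The main obstacle is step (ii)–(iii): unlike the cubic case $\theta=1$ treated in \cite{Su},\cite{SaSu}, the nonlinearity $|u|^{2\theta/d}u$ with $0<\theta<1$ is only H\"older-continuous (not $C^1$) in $u$ when $2\theta/d < 1$, so the usual fractional-derivative estimates on the nonlinear term, and the Lipschitz estimates needed for uniqueness and for comparing $u$ with its profile, are delicate; this is precisely why one must take $s$ in the restricted window \eqref{index}, with $s<1+2\theta/d$ giving just enough regularity of the nonlinearity relative to the $H^{s}$-scale, and $d/2<s$ giving the $L^\infty$ embedding. A secondary difficulty is that the a priori $L^2$ bound is unavailable (since $\imagpart\lambda>0$), so all growth must be extracted from the pointwise-in-$\xi$ ODE and fed back through the $\Sigma^s$-estimate without any conservation law; keeping the feedback loop closed up to the sharp threshold — and not merely up to a fixed fraction of it — is what requires the careful constant-tracking in step (iv). I would also need a matching local-existence theorem in the space $\op{U}(\cdot)^{-1}u\in C([0,T);\Sigma^s)$ under \eqref{index}; this is essentially standard but should be stated as a lemma before the main argument.
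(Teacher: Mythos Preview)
Your proposal is correct and follows essentially the same route as the paper: reduce to the profile ODE $i\pa_t A = \lambda t^{-\theta}|A|^{2\theta/d}A + R$ for $A = \op{F}\op{U}(t)^{-1}u$, compare $A$ with the explicit Bernoulli solution via a perturbation/continuity argument, feed the resulting $L^\infty$ bound on $A$ back into $\|u(t)\|_{L^\infty}$ and close a Gronwall--bootstrap on $\|\op{U}(t)^{-1}u\|_{\Sigma^s}$ up to any $\sigma<\tau_0$. The only organizational addition in the paper is a preliminary rough bound on $[0,t_*]$ with $t_* = \eps^{-\theta/((1-\theta)d)}$ (Proposition~\ref{prp_rough}) so that on $[t_*,T]$ the extra $t^{-\gamma}$ decay of $R$ becomes $t_*^{-\gamma}=\eps^{\delta}$ smallness, which is exactly the form needed in the ODE perturbation lemma (Lemma~\ref{lem_ODE}); note also that your displayed sign in $\pa_t|\alpha|^2$ should be $+2\imagpart\lambda$ rather than $-2\imagpart\lambda$, though your integrated inequality is stated correctly.
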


\begin{rmk}
The assumption \eqref{index} is never satisfied when $d\ge 4$. 
That is the reason why Theorem~\ref{thm_main} is available only for $d\le 3$. 
When $d=1$ or $2$, \eqref{index} is satisfied for any $0<\theta <1$. 
In particular, our result can be viewed as an extension of Proposition 
\ref{prp_prev} 
because it  corresponds to the case of 
$d=1$, $1/2\le \theta <1$ and $s=1$ in Theorem~\ref{thm_main}. 
On the other hand, when $d=3$, \eqref{index} is satisfied only if 
$\theta> 3/4$ (or, equivalently, $3/2<p<5/3$ with $p=1+2\theta/3$). 
The authors do not know whether the same assertion holds true or not 
when $d\ge 4$ or $d=3$ with $\theta\le 3/4$.
\end{rmk}

\begin{rmk}
The authors do not know whether \eqref{goal} is optimal or not. 
An example of blowing-up solution to \eqref{nls} with arbitrarily small 
$\eps>0$ has been given by Kita \cite{Kita} under a particular choice of 
$\varphi$ and some additional restrictions on $\lambda$ and $p$. However, 
it seems difficult to specify the lifespan for the blowing-up solution 
given in \cite{Kita}.
\end{rmk}

Now, let us explain the differences between the approach of \cite{Sas} and 
ours. The method of \cite{Sas} consists of two steps: the first is to 
construct a suitable approximate solution $u_a$ which blows up at the expected 
time, and the second is to get an a priori estimate not for the solution $u$ 
itself but for their difference $u-u_a$ 
(see also \cite{Su} for the cubic case). 
Drawbacks of this approach come from the first step. 
In fact, according to Remark 1.3 in \cite{Sas}, this approach can not be 
used in the case $1<p<2$. Remark that this implies the method of 
\cite{Sas} is not suitable for $d$-dimensional settings when $d\ge 2$, 
because our main interest is the case of $p<1+2/d$. Also, in view of 
Proposition~3.1 in \cite{Sas}, the additional decay assumption on $\varphi$ 
as $|x|\to \infty$ (i.e., higher regularity for $\hat{\varphi}$) 
seems essential for the method of \cite{Sas}. 
On the other hand, our approach presented below does not rely on 
approximate solutions at all. Instead, 
we will reduce the original PDE \eqref{nls_d} to a simpler ordinary 
differential equation satisfied by 
$A(t,\xi)=\op{F}\bigl[\op{U}(t)^{-1} u(t,\cdot)\bigr](\xi)$ 
up to a harmless remainder term $R$ (see \eqref{reduced_ODE} below). An ODE 
lemma prepared in Section~\ref{sec_prelim} below will allow us to get 
an a priori bound for $u$ directly. Similar idea has been used in \cite{SaSu} 
for one-dimensional cubic derivative nonlinear Schr\"odinger equations, 
but we must be more careful because we are considering the situation in which 
the degree of the nonlinearity is lower.

We close the introduction with the contents of this paper. 
In the next section, we state basic lemmas which will be useful in the 
subsequent sections. In Section~\ref{sec_rough}, we will derive a rough 
lower estimate for $T_{\eps}$, that is, 
$\dis{\liminf_{\eps \to +0} (\eps^{2\theta/d} T_{\eps}^{1-\theta})>0}$. 
Section~\ref{sec_prelim} is devoted to an ODE lemma which plays an important 
role in getting an a priori bound for the solution. 
After that, the main theorem will be proved in Section~\ref{sec_bootstrap} 
by means of the so-called bootstrap argument. Finally, in 
Section~\ref{sec_critical}, we give a few comments on the critical case 
$\theta =1$. 
In what follows, we denote several positive constants by the same letter C, 
which may vary from one line to another.

\section{Basic lemmas}  \label{sec_basic}

In this section, we introduce several lemmas that 
will be useful in the subsequent sections.

\begin{lem} \label{lem_wSob}
Let $s>d/2$. 
There exists a constant $C$ such that
\[
\| \phi\|_{L^{\infty}}
\le
\frac{C}{(1+t)^{d/2}} 
\| \mathcal{U}(t)^{-1} \phi\|_{\Sigma^s}
\]
for $t\ge 0$.
\end{lem}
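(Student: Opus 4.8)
The plan is to exploit the well-known factorization of the free Schr\"odinger
evolution. Recall that $\op{U}(t) = \op{M}(t)\, \op{D}(t)\, \op{F}\, \op{M}(t)$
for $t\neq 0$, where $\op{M}(t)$ is multiplication by $e^{i|x|^2/(2t)}$ and
$\op{D}(t)$ is the (unitary-on-$L^2$) dilation $(\op{D}(t)f)(x) = (it)^{-d/2} f(x/t)$.
From this one reads off $\op{U}(t)^{-1} = \op{M}(-t)\,\op{F}^{-1}\,\op{D}(t)^{-1}\,\op{M}(-t)$,
and hence $\phi = \op{M}(t)\,\op{D}(t)\,\op{F}\,\op{M}(t)\,\op{U}(t)^{-1}\phi$. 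Since
$|\op{M}(t)|=1$ pointwise and $\op{D}(t)$ contributes a factor $|it|^{-d/2}=t^{-d/2}$
in $L^\infty$, I get
\[
 \|\phi\|_{L^\infty} = t^{-d/2}\,\bigl\| \op{F}\op{M}(t)\op{U}(t)^{-1}\phi \bigr\|_{L^\infty}
 \le C\, t^{-d/2}\, \bigl\| \op{F}\op{M}(t)\op{U}(t)^{-1}\phi \bigr\|_{H^s}
\]
for $t>0$, using the Sobolev embedding $H^s(\R^d)\hookrightarrow L^\infty(\R^d)$
valid precisely because $s>d/2$, together with $\|\op{F}g\|_{H^s}\le C\|g\|_{H^{0,s}}$
(the Fourier transform exchanges the weight $\jb{x}^s$ and the derivative weight
$\jb{D}^s$). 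So it remains to bound $\|\op{M}(t)\psi\|_{H^{0,s}}$ in terms of
$\|\psi\|_{\Sigma^s}$, where $\psi = \op{U}(t)^{-1}\phi$.

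The key point is that multiplication by the unimodular factor $\op{M}(t)$ commutes
with the spatial weight $(1+|x|^2)^{s/2}$ exactly, so $\|\op{M}(t)\psi\|_{H^{0,s}}
= \|\psi\|_{H^{0,s}} \le \|\psi\|_{\Sigma^s}$ with no loss. (There is no issue of
large $t$ spoiling things, because the $t$-decay has already been extracted from
$\op{D}(t)$.) Finally, to cover small $t$ uniformly and produce the stated
$(1+t)^{-d/2}$ rather than $t^{-d/2}$, I would simply note that for $0\le t\le 1$
the bound $\|\phi\|_{L^\infty}\le C\|\phi\|_{H^s} = C\|\op{U}(t)\op{U}(t)^{-1}\phi\|_{H^s}
\le C\|\op{U}(t)^{-1}\phi\|_{H^{s,0}}\le C\|\op{U}(t)^{-1}\phi\|_{\Sigma^s}$ holds
directly from Sobolev embedding and the unitarity of $\op{U}(t)$ on $H^{s,0}$; then
patch the two regimes together, absorbing constants, to get the single estimate
with $(1+t)^{-d/2}$ for all $t\ge 0$.

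I expect the only genuinely delicate point to be the bookkeeping with fractional
(non-integer) $s$: one must make sure that $\|\op{F}g\|_{H^s}\lesssim \|g\|_{H^{0,s}}$
and that $H^s\hookrightarrow L^\infty$ are invoked correctly for $s$ in the range
$d/2<s<2$ (rather than integer $s$), but these are standard facts about Bessel
potential spaces. The commutation $\op{M}(t)\jb{x}^s=\jb{x}^s\op{M}(t)$ is exact and
needs no care. Everything else is the routine factorization computation, so I would
present the argument compactly: state the factorization of $\op{U}(t)$, read off the
$L^\infty$ identity, apply Sobolev embedding and the Fourier correspondence of weights,
use unimodularity of $\op{M}(t)$, and finally merge the $t\le 1$ and $t\ge 1$ estimates.
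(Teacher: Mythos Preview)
Your argument is correct and takes a slightly different (and arguably more direct) route than the paper. The paper applies the Gagliardo--Nirenberg--Sobolev inequality
\(
\|\phi\|_{L^\infty}\le C\|\phi\|_{L^2}^{1-d/2s}\|(-\Delta)^{s/2}\phi\|_{L^2}^{d/2s}
\)
together with the operator identity
\(
\op{U}(t)|x|^s\op{U}(t)^{-1}=\op{M}(t)(-t^2\Delta)^{s/2}\op{M}(t)^{-1},
\)
so that applying GNS to $\op{M}(t)^{-1}\phi$ yields the $t^{d/2}$ gain; adding this to the $t=0$ estimate produces a factor $(1+t^{d/2})^{-1}\sim(1+t)^{-d/2}$ in one stroke, with no case split. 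Your version instead uses the full $\op{M}\op{D}\op{F}\op{M}$ factorization to extract the $t^{-d/2}$ from the dilation in $L^\infty$, and then only the plain Sobolev embedding $H^{s}\hookrightarrow L^\infty$ plus the exact Fourier duality $\|\op{F}g\|_{H^{s,0}}=\|g\|_{H^{0,s}}$ and the unimodularity of $\op{M}(t)$, at the cost of a separate elementary estimate for $0\le t\le 1$. Both proofs rest on the same pseudo-conformal structure; yours avoids the interpolation inequality, while the paper's avoids the time-splitting.
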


\begin{proof}
We start with the standard Gagliardo-Nirenberg-Sobolev inequality: 
\[
\|\phi\|_{L^{\infty}}
\le 
C \|\phi\|_{L^2}^{1-d/2s}\|(-\Delta)^{s/2} \phi\|_{L^2}^{d/2s}.
\]
We also introduce 
$\op{M}(t)=\exp(\frac{i|x|^2}{2t})$. Then we can check that 
\begin{align*}
\op{U}(t)|x|^s\op{U}(t)^{-1} \phi
=
\op{M}(t) (-t^2\Delta)^{s/2}\op{M}(t)^{-1}\phi,
\end{align*}
from which it follows that 
\begin{align*}
t^{d/2}\|\phi\|_{L^{\infty}}
&=
t^{d/2}\|\op{M}(t)^{-1}\phi\|_{L^{\infty}}\\
&\le 
C\|\op{M}(t)^{-1}\phi\|_{L^2}^{1-d/2s}
 \|(-t^2 \Delta)^{s/2} \op{M}(t)^{-1}\phi\|_{L^2}^{d/2s}\\
&\le 
C\|\phi\|_{L^2}^{1-d/2s}
 \||x|^s\op{U}(t)^{-1}\phi\|_{L^{2}}^{d/2s}.
\end{align*}
Combining the two inequalities above, we obtain 
\begin{align*}
 (1+t)^{d/2}\|\phi\|_{L^{\infty}}
 &\le 
 \frac{C(1+t)^{d/2}}{(1+t^{d/2})} \|\phi\|_{L^2}^{1-d/2s}
 \bigl( \|(-\Delta)^{s/2} \phi\|_{L^2}^{d/2s}
 + \bigl\| |x|^s\op{U}(t)^{-1}\phi \bigr\|_{L^2}^{d/2s} 
 \bigr)\\
 &\le
 C
\left(
\|\phi \|_{H^{s,0}}+ \| \mathcal{U}(t)^{-1} \phi\|_{H^{0,s}}
\right)\\
 &=
 C \| \mathcal{U}(t)^{-1} \phi\|_{\Sigma^{s}}.
\end{align*}

\end{proof}

\begin{lem} \label{lem_Linfty}
Let $\gamma \in (0,1]$ and $s>d/2+2\gamma$. 
There exists a constant $C$ such that
$$
\| \phi\|_{L^{\infty}}
\le
\frac{1}{t^{d/2}} \|\mathcal{F} \mathcal{U}(t)^{-1} \phi \|_{L^{\infty}}
+ 
 \frac{C}{t^{d/2+\gamma}} \| \mathcal{U}(t)^{-1} \phi\|_{H^{0,s}}
$$ 
for $t\geq 1$.
\end{lem}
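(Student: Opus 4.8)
The plan is to use the standard factorization of the free Schr\"odinger group. With $\op{M}(t)=\exp(i|x|^2/(2t))$ as in the proof of Lemma~\ref{lem_wSob}, completing the square in the exponent of the Fourier-integral representation of $\op{U}(t)$ gives, for every $t>0$,
\[
\op{U}(t)\psi(x)=\frac{e^{i|x|^2/(2t)}}{(it)^{d/2}}\,\op{F}\bigl[\op{M}(t)\psi\bigr](x/t).
\]
Applying this with $\psi=\op{U}(t)^{-1}\phi$ and using $|e^{i|x|^2/(2t)}|\equiv 1$ yields the pointwise identity $|\phi(x)|=t^{-d/2}\bigl|\op{F}[\op{M}(t)\op{U}(t)^{-1}\phi](x/t)\bigr|$, hence
\[
t^{d/2}\|\phi\|_{L^\infty}=\bigl\|\op{F}\bigl[\op{M}(t)\,\op{U}(t)^{-1}\phi\bigr]\bigr\|_{L^\infty}.
\]

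Next I would write $\op{M}(t)=1+(\op{M}(t)-1)$ and split the right-hand side accordingly. The contribution of the $1$ is exactly $\|\op{F}\op{U}(t)^{-1}\phi\|_{L^\infty}$, which produces the first term on the right of the desired inequality after dividing by $t^{d/2}$. For the remaining term I would use the trivial bound $\|\op{F}g\|_{L^\infty}\le C\|g\|_{L^1}$ together with the elementary inequality $|e^{i\sigma}-1|\le C|\sigma|^{\gamma}$, valid for all real $\sigma$ because $0<\gamma\le 1$, which gives $|\op{M}(t)(x)-1|\le C t^{-\gamma}|x|^{2\gamma}$. Therefore the error term is bounded by
\[
C t^{-d/2-\gamma}\int_{\R^d}|x|^{2\gamma}\,\bigl|\op{U}(t)^{-1}\phi(x)\bigr|\,dx .
\]

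Finally, inserting the weight $(1+|x|^2)^{s/2}$ and applying the Cauchy--Schwarz inequality, the last integral is controlled by
\[
\Bigl(\int_{\R^d}\frac{|x|^{4\gamma}}{(1+|x|^2)^{s}}\,dx\Bigr)^{1/2}\,\|\op{U}(t)^{-1}\phi\|_{H^{0,s}},
\]
and the first factor is finite precisely when $2s-4\gamma>d$, i.e.\ under the standing hypothesis $s>d/2+2\gamma$; the restriction $t\ge 1$ is needed only to keep the power of $t$ in the error term harmless. Collecting the estimates and dividing through by $t^{d/2}$ gives the claim. I do not expect a genuine obstacle here: the computation is routine, and the only point requiring care is to match the H\"older exponent $\gamma$ in $|e^{i\sigma}-1|\lesssim|\sigma|^{\gamma}$ against the weight exponent $s$ so that the above integral converges, which is exactly the role of the assumption $s>d/2+2\gamma$.
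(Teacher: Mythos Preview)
Your proof is correct and is precisely the standard Hayashi--Naumkin argument; the paper gives no proof of its own but simply cites Lemma~2.2 of \cite{HN}, whose proof is exactly the factorization of $\op{U}(t)$ together with the splitting $\op{M}(t)=1+(\op{M}(t)-1)$ that you have written out. One small remark: the inequality in fact holds for every $t>0$ with the same constant, so the hypothesis $t\ge 1$ is not ``needed'' for the proof at all---it is merely the regime in which the lemma is applied later.
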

See Lemma 2.2 in \cite{HN} for the proof.

Next we define $G_p:\C\to \C$ with $p>1$ by 
$G_p(z)=|z|^{p-1} z$ for $z\in \C$.
Note that the nonlinear term in \eqref{nls_d} is 
$\lambda G_{1+2\theta/d}(u)$ 
with $0<\theta<1$, $\imagpart \lambda>0$. 
The following lemmas are concerned with estimates for $G_p$:

\begin{lem} \label{lem_elementary}
For $z, w\in \C$, we have 
\[
\bigl| G_p(z) -G_p(w) \bigr|
\le 
p (|z|+|w|)^{p-1} |z-w|.
\]
\end{lem}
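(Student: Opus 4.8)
The plan is to estimate $G_p(z)-G_p(w)$ by integrating the differential of $G_p$ along the line segment joining $w$ to $z$. First I would record that, since $p>1$, the map $G_p:\C\to\C$ is of class $C^1$ when $\C$ is identified with $\R^2$, with Wirtinger derivatives
\[
\pa_{\zeta} G_p(\zeta)=\frac{p+1}{2}|\zeta|^{p-1},
\qquad
\pa_{\bar{\zeta}} G_p(\zeta)=\frac{p-1}{2}|\zeta|^{p-3}\zeta^2
\qquad (\zeta\neq 0),
\]
obtained by differentiating $G_p(\zeta)=(\zeta\bar{\zeta})^{(p-1)/2}\zeta$; both of these extend continuously by the value $0$ to $\zeta=0$, so indeed $G_p\in C^1(\C)$. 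For any complex increment $h$ the real-linear differential is then $dG_p(\zeta)h=\pa_{\zeta}G_p(\zeta)\,h+\pa_{\bar{\zeta}}G_p(\zeta)\,\bar{h}$, whence
\[
|dG_p(\zeta)h|
\le\Bigl(\frac{p+1}{2}+\frac{p-1}{2}\Bigr)|\zeta|^{p-1}\,|h|
= p\,|\zeta|^{p-1}\,|h|.
\]

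Next I would set $\zeta(\tau)=w+\tau(z-w)$ for $\tau\in[0,1]$ and apply the fundamental theorem of calculus to $\tau\mapsto G_p(\zeta(\tau))$, which is legitimate because $G_p\in C^1$ and $\zeta(\cdot)$ is affine:
\[
G_p(z)-G_p(w)=\int_0^1 dG_p\bigl(\zeta(\tau)\bigr)(z-w)\,d\tau .
\]
Taking absolute values and using the pointwise bound on $dG_p$ above gives
\[
|G_p(z)-G_p(w)|\le p\Bigl(\sup_{0\le\tau\le 1}|\zeta(\tau)|^{p-1}\Bigr)|z-w|.
\]
Finally, since $|\zeta(\tau)|\le(1-\tau)|w|+\tau|z|\le|z|+|w|$ and $p-1>0$, we have $|\zeta(\tau)|^{p-1}\le(|z|+|w|)^{p-1}$ for all $\tau\in[0,1]$, which yields the claimed inequality.

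The only point deserving a little care is the behaviour at the origin: when $1<p<2$ the \emph{second} derivatives of $G_p$ blow up near $0$, so one cannot differentiate twice; but one does not need to, since the first derivatives already extend continuously across $0$, and the one-dimensional fundamental theorem of calculus applies on the whole segment even when it passes through the origin. Alternatively, one may prove the estimate first for $z,w$ away from $0$ and then pass to the limit using the continuity of both sides. I do not expect any genuine obstacle here; the lemma is elementary and self-contained.
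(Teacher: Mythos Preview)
Your proof is correct. The approach, however, differs from the paper's. You treat $G_p$ as a $C^1$ map $\C\to\C$, compute its Wirtinger derivatives, bound the real-linear differential by $p|\zeta|^{p-1}$, and then integrate along the straight segment from $w$ to $z$. The paper instead uses the algebraic splitting
\[
G_p(z)-G_p(w)=\bigl(|z|^{p-1}-|w|^{p-1}\bigr)w+|z|^{p-1}(z-w),
\]
and controls the first piece via the one-variable identity
$|z|^{\nu}-|w|^{\nu}=(|z|-|w|)\int_0^1 \nu\bigl(t|z|+(1-t)|w|\bigr)^{\nu-1}dt$
with $\nu=p-1$, together with a short case distinction $\nu\ge 1$ versus $\nu<1$ to bound $(t|z|+(1-t)|w|)^{\nu-1}|w|\le (|z|+|w|)^{\nu}$. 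Your route is arguably more streamlined: once $G_p\in C^1(\C)$ is checked (including at $\zeta=0$, which you handle carefully), the conclusion follows in one stroke with no case split. The paper's route, on the other hand, stays entirely in real one-variable calculus and never needs the differentiability of $G_p$ at the origin; it only uses that $t\mapsto t^{\nu}$ is $C^1$ on $[0,\infty)$ and an elementary pointwise bound. Both arguments give the same sharp constant $p$.
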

\begin{proof} Without loss of generality, we may assume $|z|>|w|$. 
For $\nu >0$, we observe the relations
\[
 |z|^\nu -|w|^\nu
 =
 (|z|-|w|) 
 \int_{0}^{1} \nu \bigl(t |z|+(1-t)|w| \bigr)^{\nu-1}\, dt
\]
and
\[
\sup_{t\in [0,1]}\bigl(t |z|+(1-t)|w| \bigr)^{\nu-1}|w|
\le
 \left\{\begin{array}{cl}
 (|z|+|w|)^{\nu -1}|w| & \mbox{ (if $\nu \ge 1$)}\\[3mm]
 |w|^{\nu} & \mbox{ (if $\nu < 1$)}
\end{array}\right\}
 \le 
 (|z|+|w|)^{\nu}.
\]
Then we have 
\[
 \bigl| (|z|^{\nu} -|w|^{\nu} )w \bigr|
 \le
 \bigl| |z|-|w|\bigr|\cdot \nu \bigl(|z|+|w| \bigr)^{\nu} 
 \le
 \nu \bigl(|z|+|w| \bigr)^{\nu} |z-w|.
\]
We apply the above inequality with $\nu=p-1$ to obtain
\[
 \bigl|G_p(z) -G_p(w) \bigr|
 \le  \bigl|(|z|^{p-1} - |w|^{p-1})w \bigr| + |z|^{p-1} |z-w|
 \le 
 p\bigl(|z|+|w| \bigr)^{p-1} |z-w|.
\]
\end{proof}

\begin{lem} \label{lem_composition} 
Let $0\le s < \min\{2,p\}$. There exists a constant $C$ such that
$$
 \|G_p(\phi)\|_{H^{s,0}} 
 \le 
 C\|\phi\|_{L^{\infty}}^{p-1} \|\phi\|_{H^{s,0}}
$$
and
$$
 \|\mathcal{U}(t)^{-1} G_p(\phi)\|_{H^{0,s}} 
 \le 
 C\|\phi\|_{L^{\infty}}^{p-1} \|\mathcal{U}(t)^{-1} \phi\|_{H^{0,s}}
$$
for $t\ge 0$.
\end{lem}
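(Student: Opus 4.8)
The plan is to reduce both inequalities to the single \emph{homogeneous} estimate
\begin{equation*}
\|(-\Delta)^{s/2}G_p(\phi)\|_{L^2}\le C\,\|\phi\|_{L^{\infty}}^{p-1}\,\|(-\Delta)^{s/2}\phi\|_{L^2},
\qquad 0\le s<\min\{2,p\},
\end{equation*}
and then to establish the latter. First, the $L^2$-parts of the two asserted bounds are immediate from $|G_p(\phi)|=|\phi|^p\le\|\phi\|_{L^{\infty}}^{p-1}|\phi|$ together with the unitarity of $\op{U}(t)$. For the weighted part, recall from the proof of Lemma~\ref{lem_wSob} the intertwining identity $\op{U}(t)|x|^s\op{U}(t)^{-1}=\op{M}(t)(-t^2\Delta)^{s/2}\op{M}(t)^{-1}$ with $\op{M}(t)=\exp(i|x|^2/(2t))$, and note that since $\op{M}(t)$ is a unimodular multiplier one has $\op{M}(t)^{-1}G_p(\phi)=|\phi|^{p-1}\op{M}(t)^{-1}\phi=G_p(\op{M}(t)^{-1}\phi)$. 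Hence
\begin{equation*}
\bigl\||x|^s\op{U}(t)^{-1}G_p(\phi)\bigr\|_{L^2}
=\bigl\|(-t^2\Delta)^{s/2}G_p(\op{M}(t)^{-1}\phi)\bigr\|_{L^2}
=t^s\bigl\|(-\Delta)^{s/2}G_p(\op{M}(t)^{-1}\phi)\bigr\|_{L^2},
\end{equation*}
and applying the homogeneous estimate to $\op{M}(t)^{-1}\phi$ (whose $L^{\infty}$-norm is $\|\phi\|_{L^{\infty}}$), then reading the intertwining identity backwards, bounds the right-hand side by $C\|\phi\|_{L^{\infty}}^{p-1}\||x|^s\op{U}(t)^{-1}\phi\|_{L^2}$. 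Since $\|f\|_{H^{s,0}}\simeq\|f\|_{L^2}+\|(-\Delta)^{s/2}f\|_{L^2}$ and $\|f\|_{H^{0,s}}\simeq\|f\|_{L^2}+\||x|^sf\|_{L^2}$, both inequalities of the lemma follow once the homogeneous estimate is proved.

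For $0<s<1$ I would use the Gagliardo representation $\|(-\Delta)^{s/2}f\|_{L^2}^2=c_{d,s}\iint_{\R^d\times\R^d}|f(x)-f(y)|^2|x-y|^{-d-2s}\,dx\,dy$. Lemma~\ref{lem_elementary} gives $|G_p(\phi)(x)-G_p(\phi)(y)|\le p(|\phi(x)|+|\phi(y)|)^{p-1}|\phi(x)-\phi(y)|\le p(2\|\phi\|_{L^{\infty}})^{p-1}|\phi(x)-\phi(y)|$, and substituting this into the double integral yields the bound at once. The case $s=1$ is just the classical chain rule: $\pa_jG_p(\phi)=G_{p,1}(\phi)\pa_j\phi+G_{p,2}(\phi)\overline{\pa_j\phi}$, where $G_{p,1}=\pa_zG_p$ and $G_{p,2}=\pa_{\bar z}G_p$ are positively homogeneous of degree $p-1$, so $|G_{p,k}(\phi)|\le C\|\phi\|_{L^{\infty}}^{p-1}$.

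For $1<s<\min\{2,p\}$, put $\sigma=s-1\in(0,1)$. Since $G_p\in C^1$, the same chain rule reduces matters to estimating each product $G_{p,k}(\phi)\,\pa_j\phi$ in $\dot H^{\sigma}$. Here I would invoke a fractional Leibniz (Kato--Ponce, or Bony paraproduct) estimate: the ``$L^{\infty}$ part'' $\|G_{p,k}(\phi)\|_{L^{\infty}}\|\pa_j\phi\|_{\dot H^{\sigma}}\le C\|\phi\|_{L^{\infty}}^{p-1}\|(-\Delta)^{s/2}\phi\|_{L^2}$ is harmless, and the remaining part is controlled once $G_{p,k}(\phi)$ is placed in a fractional space of order $\sigma$. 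For the latter one uses the elementary pointwise bound
\begin{equation*}
|G_{p,k}(z)-G_{p,k}(w)|\le C\bigl(|z|+|w|\bigr)^{p-1-\sigma}|z-w|^{\sigma},
\end{equation*}
valid precisely because $0<\sigma<\min\{1,p-1\}$, i.e.\ $s<\min\{2,p\}$ (it follows by interpolating the trivial estimate $|G_{p,k}(z)-G_{p,k}(w)|\le C(|z|+|w|)^{p-1}$ with the case $\sigma=\min\{1,p-1\}$, the latter being elementary along the lines of Lemma~\ref{lem_elementary}); it yields $|G_{p,k}(\phi)(x)-G_{p,k}(\phi)(y)|\le C\|\phi\|_{L^{\infty}}^{p-1-\sigma}|\phi(x)-\phi(y)|^{\sigma}$, from which the required control of $G_{p,k}(\phi)$ in the relevant fractional space, with the correct power of $\|\phi\|_{L^{\infty}}$, can be extracted.

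The main obstacle is exactly this last step. Because $p<3$, the nonlinearity $G_p$ is only of class $C^{1,\min\{1,p-1\}}$ and is \emph{not} $C^2$, so no naive Leibniz/chain rule is available beyond one derivative and the fractional product estimate has to be assembled by hand --- via a Littlewood--Paley decomposition, or via a careful second-difference argument (valid for $0<s<2$) for the Besov realization of $\dot H^s$. The hypothesis $s<\min\{2,p\}$ is precisely what makes the scheme close: $s<2$ keeps us below the ``two derivatives'' ceiling, while $s<p$ guarantees that $G_p'$ carries enough H\"older regularity (exponent $p-1$ when $p<2$) to absorb the remaining $\sigma=s-1$ derivatives. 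Lemma~\ref{lem_elementary} and its analogue for $G_{p,k}$ displayed above are what keep every pointwise estimate in this argument explicit.
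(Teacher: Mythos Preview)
The paper does not supply a proof of this lemma; it simply refers the reader to \cite{GOV}, Lemma~3.4, and \cite{HN}, Lemma~2.3. Your outline is faithful to what those references actually do. The reduction of the weighted estimate to the homogeneous one via the intertwining identity $\op{U}(t)|x|^s\op{U}(t)^{-1}=\op{M}(t)(-t^2\Delta)^{s/2}\op{M}(t)^{-1}$ together with the gauge invariance $\op{M}(t)^{-1}G_p(\phi)=G_p(\op{M}(t)^{-1}\phi)$ is precisely the device of \cite{HN}, and the homogeneous composition estimate $\|(-\Delta)^{s/2}G_p(\phi)\|_{L^2}\le C\|\phi\|_{L^{\infty}}^{p-1}\|(-\Delta)^{s/2}\phi\|_{L^2}$ for $0\le s<\min\{2,p\}$ is the content of \cite{GOV}, Lemma~3.4, whose proof indeed rests on the $C^{0,\min\{1,p-1\}}$ regularity of $\pa_z G_p$, $\pa_{\bar z}G_p$ and a second-difference/Besov characterization of $H^s$ for $s<2$. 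Your identification of the obstruction and of the exact role of the hypothesis $s<\min\{2,p\}$ is correct.

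Two small remarks. First, the intertwining identity is only available for $t>0$, so the case $t=0$ of the second inequality should be handled separately; it is trivial, since $\op{U}(0)^{-1}=\mathrm{Id}$ and $|G_p(\phi)|\le\|\phi\|_{L^{\infty}}^{p-1}|\phi|$ pointwise gives $\|G_p(\phi)\|_{H^{0,s}}\le\|\phi\|_{L^{\infty}}^{p-1}\|\phi\|_{H^{0,s}}$ directly. Second, the fractional-Leibniz step for $1<s<2$ that you leave as a sketch is genuinely the most laborious part of the argument in \cite{GOV}; what you wrote is the right skeleton, but in a self-contained write-up you would need to specify the exponents in the product estimate (or carry out the second-difference computation) rather than just allude to it.
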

For the proof, see Lemma 3.4 in \cite{GOV},  Lemma 2.3 in \cite{HN}, etc.

\begin{cor} \label{cor_composition} 
Let $d/2 < s < \min\{2,p\}$. There exists a constant $C$ such that
\[
  \|\mathcal{U}(t)^{-1} G_p(\phi)\|_{\Sigma^{s}} 
\le
 \frac{C}{(1+t)^{d(p-1)/2}} \|\mathcal{U}(t)^{-1} \phi\|_{\Sigma^{s}}^{p} 
\]
for $t\ge 0$. 
\end{cor}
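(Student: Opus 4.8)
The plan is to deduce the estimate by chaining Lemma~\ref{lem_composition} with Lemma~\ref{lem_wSob}; no genuinely new idea is needed, only bookkeeping. First I would record the two elementary facts that make $\op{U}(t)^{-1}$ transparent for the $H^{s,0}$-norm: since $\op{U}(t)=\exp(\tfrac{it}{2}\Delta)$ is unitary on $L^2(\R^d)$ and commutes with $(1-\Delta)^{s/2}$, we have $\|\op{U}(t)^{-1}\psi\|_{H^{s,0}}=\|\psi\|_{H^{s,0}}$ for any $\psi$. Applying this to $\psi=G_p(\phi)$ and to $\psi=\phi$, the first inequality of Lemma~\ref{lem_composition} becomes
\[
 \|\op{U}(t)^{-1}G_p(\phi)\|_{H^{s,0}}
 \le C\|\phi\|_{L^\infty}^{p-1}\,\|\op{U}(t)^{-1}\phi\|_{H^{s,0}}.
\]

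Next I would combine this with the second inequality of Lemma~\ref{lem_composition}, namely $\|\op{U}(t)^{-1}G_p(\phi)\|_{H^{0,s}}\le C\|\phi\|_{L^\infty}^{p-1}\|\op{U}(t)^{-1}\phi\|_{H^{0,s}}$, and add the two, using $\|f\|_{\Sigma^s}=\|f\|_{H^{s,0}}+\|f\|_{H^{0,s}}$ on both sides, to obtain
\[
 \|\op{U}(t)^{-1}G_p(\phi)\|_{\Sigma^s}
 \le C\|\phi\|_{L^\infty}^{p-1}\,\|\op{U}(t)^{-1}\phi\|_{\Sigma^s}.
\]
Finally, I would estimate the $L^\infty$ factor by Lemma~\ref{lem_wSob}: since $s>d/2$, one has $\|\phi\|_{L^\infty}^{p-1}\le C(1+t)^{-d(p-1)/2}\,\|\op{U}(t)^{-1}\phi\|_{\Sigma^s}^{p-1}$, and substituting this into the previous display produces exactly the claimed bound with the power $p$ on the right-hand side and the decay factor $(1+t)^{-d(p-1)/2}$.

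There is no real obstacle here; the whole content is that the composition estimate of Lemma~\ref{lem_composition} upgrades to a decaying one once the pointwise norm is controlled by the weighted Sobolev norm via dispersion. The only points to check are that the single hypothesis $d/2<s<\min\{2,p\}$ simultaneously meets the requirement $0\le s<\min\{2,p\}$ of Lemma~\ref{lem_composition} and the requirement $s>d/2$ of Lemma~\ref{lem_wSob} — which it does — and that $\op{U}(t)^{-1}$ may indeed be moved through the $H^{s,0}$-norm, which is immediate from unitarity of $\op{U}(t)$ on $L^2$ together with the functional calculus for $\Delta$.
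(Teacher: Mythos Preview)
Your proof is correct and follows essentially the same route as the paper's own argument: expand the $\Sigma^s$-norm, use unitarity of $\op{U}(t)$ on $H^{s,0}$ to apply both parts of Lemma~\ref{lem_composition}, and then absorb the $L^\infty$ factor via Lemma~\ref{lem_wSob}. Your additional remarks on why the single hypothesis $d/2<s<\min\{2,p\}$ covers both lemmas and why $\op{U}(t)^{-1}$ is transparent for $H^{s,0}$ are correct and make the write-up slightly more explicit than the paper's, but there is no substantive difference.
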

\begin{proof} 
By  Lemmas~\ref{lem_composition}  and \ref{lem_wSob}, we have
\begin{align*}
  \|\mathcal{U}(t)^{-1} G_p(\phi)\|_{\Sigma^{s}} 
 &= 
\|G_p(\phi)\|_{H^{s,0}} 
 + \bigl\| \op{U}(t)^{-1}G_p(\phi) \bigr\|_{H^{0,s}}\\
 &\le 
 C\|\phi\|_{L^{\infty}}^{p-1} \bigl(\|\phi\|_{H^{s,0}}
 +\|\op{U}(t)^{-1}\phi\|_{H^{0,s}}\bigr)\\
 &\le 
 \frac{C}{(1+t)^{d(p-1)/2}} \|\mathcal{U}(t)^{-1} \phi\|_{\Sigma^{s}}^{p}. 
\end{align*}
\end{proof}

\begin{lem} \label{lem_remainder}
Let $\gamma\in (0,1/2]$ and $d/2+2\gamma<s<\min\{2,p\}$. 
Then there exists a constant $C$ such that
\[
 \left\|
   \mathcal{F} \mathcal{U}(t)^{-1} G_p(\phi)
   -
   \frac{1}{t^{d(p-1)/2}} G_p\bigl(\mathcal{F} \mathcal{U}(t)^{-1}\phi\bigr)
 \right\|_{L^{\infty}} 
 \le 
 \frac{C}{t^{d(p-1)/2+\gamma}} \|\mathcal{U}(t)^{-1} \phi \|_{H^{0,s}}^p
\]
for $t\geq 1$.
\end{lem}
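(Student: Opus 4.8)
The plan is to use the standard factorization of the free evolution to rewrite $\op{F}\op{U}(t)^{-1}G_p(\phi)$ in closed form, and then to absorb the error into Sobolev norms. Recall that $\op{U}(t)=\op{M}(t)\op{D}(t)\op{F}\op{M}(t)$, where $\op{M}(t)=\exp(i|x|^2/(2t))$ acts by multiplication and $\op{D}(t)h(x)=(it)^{-d/2}h(x/t)$ is the dilation operator; this is precisely the identity underlying the proof of Lemma~\ref{lem_wSob}. One also has the Fresnel-integral identities $\op{F}\op{M}(t)^{\pm1}\op{F}^{-1}=\op{U}(\mp1/t)$. Since $G_p(cz)=|c|^{p-1}c\,G_p(z)$ for every $c\in\C$, the map $G_p$ is covariant: $G_p(\op{M}(t)h)=\op{M}(t)G_p(h)$ and $G_p(\op{D}(t)h)=t^{-d(p-1)/2}\op{D}(t)G_p(h)$. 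Applying these to $G_p(\phi)=G_p\bigl(\op{M}(t)\op{D}(t)\op{F}\op{M}(t)v\bigr)$ with $v:=\op{U}(t)^{-1}\phi$, then peeling off $\op{U}(t)^{-1}=\op{M}(t)^{-1}\op{F}^{-1}\op{D}(t)^{-1}\op{M}(t)^{-1}$ and using $\op{F}\op{M}(t)v=\op{U}(1/t)^{-1}A$ with $A:=\op{F}\op{U}(t)^{-1}\phi$, I would obtain the exact identity
\[
 \op{F}\op{U}(t)^{-1}G_p(\phi)
 = t^{-d(p-1)/2}\,\op{U}(1/t)\,G_p\bigl(\op{U}(1/t)^{-1}A\bigr).
\]
Consequently the left-hand side of the asserted inequality equals $t^{-d(p-1)/2}\bigl[\op{U}(1/t)G_p(\op{U}(1/t)^{-1}A)-G_p(A)\bigr]$, and since the Fourier transform exchanges weights and derivatives we have $\|A\|_{H^{s,0}}=\|\op{U}(t)^{-1}\phi\|_{H^{0,s}}$; hence it suffices to estimate that bracket in $L^{\infty}$ by $Ct^{-\gamma}\|A\|_{H^{s,0}}^{p}$ for $t\ge1$.

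The only quantitative ingredient needed is a bound on the Fourier multiplier of $\op{U}(s)-\mathrm{Id}$. From $\bigl|e^{-is|\xi|^2/2}-1\bigr|\le\min\{2,\,|s||\xi|^2/2\}\le 2|s|^{\gamma}|\xi|^{2\gamma}$, valid for all $0\le\gamma\le1$, one reads off $\|(\op{U}(s)-\mathrm{Id})g\|_{H^{\sigma,0}}\le C|s|^{\gamma}\|g\|_{H^{\sigma+2\gamma,0}}$ for every $\sigma\ge0$. Taking $\sigma=s-2\gamma$, which is $>d/2$ by the hypothesis $s>d/2+2\gamma$, and combining with the Sobolev embedding $H^{\sigma,0}\hookrightarrow L^{\infty}$, this gives
\[
 \bigl\|(\op{U}(\pm1/t)-\mathrm{Id})g\bigr\|_{L^{\infty}}\le\frac{C}{t^{\gamma}}\,\|g\|_{H^{s,0}}
 \qquad(t\ge1).
\]

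With this in hand I would split the bracket as
\[
 \op{U}(1/t)G_p(\op{U}(1/t)^{-1}A)-G_p(A)
 =(\op{U}(1/t)-\mathrm{Id})G_p\bigl(\op{U}(1/t)^{-1}A\bigr)
 +\bigl[G_p(\op{U}(1/t)^{-1}A)-G_p(A)\bigr].
\]
For the first term, the displayed $L^{\infty}$ bound applied to $g=G_p(\op{U}(1/t)^{-1}A)$, followed by Lemma~\ref{lem_composition}, the fact that $\op{U}(1/t)^{-1}$ is unitary on $H^{s,0}$, and $H^{s,0}\hookrightarrow L^{\infty}$, gives a bound $Ct^{-\gamma}\|A\|_{H^{s,0}}^{p}$ --- here the hypothesis $d/2<s<\min\{2,p\}$ is exactly the range in which Lemma~\ref{lem_composition} and the Sobolev embedding are available. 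For the second term, Lemma~\ref{lem_elementary} bounds it pointwise by $p\bigl(|\op{U}(1/t)^{-1}A|+|A|\bigr)^{p-1}\bigl|(\op{U}(1/t)^{-1}-\mathrm{Id})A\bigr|$, so using $H^{s,0}\hookrightarrow L^{\infty}$ for the first factor and the displayed $L^{\infty}$ bound for the second again yields $Ct^{-\gamma}\|A\|_{H^{s,0}}^{p}$. Multiplying by $t^{-d(p-1)/2}$ and recalling $\|A\|_{H^{s,0}}=\|\op{U}(t)^{-1}\phi\|_{H^{0,s}}$ completes the proof.

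The step I expect to require the most care is organizational rather than analytic: because $\op{U}(1/t)$ is \emph{not} bounded on $L^{\infty}$, the estimate cannot be carried out in $L^{\infty}$ throughout, and one must detour through $H^{\sigma,0}$ with $d/2<\sigma<\min\{2,p\}$, spending the $2\gamma$ derivatives produced by the smoothing estimate for $\op{U}(1/t)-\mathrm{Id}$. This is exactly why the hypothesis forces $s$ into the window $d/2+2\gamma<s<\min\{2,p\}$. One also has to keep track that the norm delivered by the argument, $\|A\|_{H^{s,0}}$, is translated back faithfully into $\|\op{U}(t)^{-1}\phi\|_{H^{0,s}}$.
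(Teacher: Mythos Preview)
Your proposal is correct and is precisely the standard Hayashi--Naumkin factorization argument that the paper defers to (the paper does not give its own proof, citing (3.16)--(3.17) of \cite{HN} and Lemma~2.2 of \cite{KitaShim1}). The exact identity $\op{F}\op{U}(t)^{-1}G_p(\phi)=t^{-d(p-1)/2}\op{U}(1/t)G_p(\op{U}(1/t)^{-1}A)$ via the $\op{M}\op{D}\op{F}\op{M}$ decomposition, followed by the two-term splitting and the interpolation bound $|e^{-is|\xi|^2/2}-1|\le C|s|^{\gamma}|\xi|^{2\gamma}$, is exactly how those references proceed, and your bookkeeping of the Sobolev indices (spending $2\gamma$ derivatives on $\op{U}(1/t)-\mathrm{Id}$ and landing in $H^{s-2\gamma,0}\hookrightarrow L^{\infty}$) matches the stated hypothesis $d/2+2\gamma<s<\min\{2,p\}$.
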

This lemma can be shown in almost the same way as the derivation of (3.16) and 
(3.17) in \cite{HN} (see also Lemma 2.2 in \cite{KitaShim1}), so we skip the 
proof.

\section{A rough lower estimate for the lifespan}  \label{sec_rough}
In what follows, we write 
$N(u)=\lambda |u|^{2\theta/d}u=\lambda G_{1+2\theta/d}(u)$ 
and 
$\Phi=\|\varphi\|_{\Sigma^s}$, where $s$ satisfies \eqref{index}. 
The goal of this section is to derive a rough lower estimate for $T_{\eps}$.  
The argument of this section is quite standard and any 
new idea is not needed, so we shall be brief.

\begin{prp} \label{prp_rough}
Let $T_{\eps}$ be the lifespan defined in the statement of 
Theorem~\ref{thm_main}. There exists $D_0>0$ such 
that $T_{\eps}\ge D_0 \eps^{-2\theta/(1-\theta)d}$. 
Moreover the solution $u$ satisfies 
\begin{align}
 \|\op{U}(t)^{-1} u(t)\|_{\Sigma^{s}} \le 2\Phi \eps
\label{est_rough}
\end{align}
for $t\le D_0\eps^{-2\theta/(1-\theta)d}$. 
\end{prp}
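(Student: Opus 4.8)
The plan is to run a standard contraction-mapping / continuity argument on the Duhamel formulation in the space $X_T := \{ u : \op{U}(\cdot)^{-1}u \in C([0,T);\Sigma^s) \}$ with the weight-free norm $\sup_{0\le t<T}\|\op{U}(t)^{-1}u(t)\|_{\Sigma^{s}}$. Writing the integral equation
\[
 \op{U}(t)^{-1}u(t) = \eps\varphi - i\int_0^t \op{U}(\tau)^{-1} N(u(\tau))\, d\tau,
\]
I would first estimate the nonlinear term. By Corollary~\ref{cor_composition} applied with $p = 1+2\theta/d$ (note $d/2<s<\min\{2,1+2\theta/d\}$ is exactly \eqref{index}, so the corollary applies), one has
\[
 \|\op{U}(\tau)^{-1} N(u(\tau))\|_{\Sigma^{s}}
 \le \frac{C|\lambda|}{(1+\tau)^{\theta}} \|\op{U}(\tau)^{-1}u(\tau)\|_{\Sigma^{s}}^{1+2\theta/d},
\]
since $d(p-1)/2 = \theta$. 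Integrating in time gives a factor $\int_0^t (1+\tau)^{-\theta}d\tau \le C t^{1-\theta}/(1-\theta)$, which is where the exponent $1-\theta$ and the time scale $\eps^{-2\theta/(1-\theta)d}$ come from.

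Next I would set up the bootstrap. Suppose on $[0,T)$ we have the a priori bound $\|\op{U}(t)^{-1}u(t)\|_{\Sigma^{s}} \le 2\Phi\eps$. Feeding this into the Duhamel estimate yields
\[
 \|\op{U}(t)^{-1}u(t)\|_{\Sigma^{s}}
 \le \Phi\eps + C|\lambda| \frac{t^{1-\theta}}{1-\theta} (2\Phi\eps)^{1+2\theta/d}
 = \Phi\eps\Bigl( 1 + \frac{C|\lambda| 2^{1+2\theta/d}\Phi^{2\theta/d}}{1-\theta}\, \eps^{2\theta/d} t^{1-\theta}\Bigr).
\]
Choosing $D_0>0$ so small that $\frac{C|\lambda| 2^{1+2\theta/d}\Phi^{2\theta/d}}{1-\theta} D_0^{1-\theta} \le 1/2$, we get that for $t \le D_0\eps^{-2\theta/(1-\theta)d}$ the right-hand side is $\le \tfrac{3}{2}\Phi\eps < 2\Phi\eps$, i.e. the bound strictly improves. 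A routine local existence statement (contraction in $X_T$ for small $T$, using Lemma~\ref{lem_elementary} together with the same $\Sigma^s$-composition estimates to control the Lipschitz difference $\|\op{U}(\tau)^{-1}(N(u)-N(v))\|_{\Sigma^s}$) shows the maximal solution is continuous in $\Sigma^s$; combined with the improved bound, a standard continuity/connectedness argument forces $T_{\eps} > D_0\eps^{-2\theta/(1-\theta)d}$ and \eqref{est_rough} on that interval.

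The only genuinely delicate point is the difference estimate needed for uniqueness and local well-posedness: $G_p$ is merely Hölder, not Lipschitz, near $0$ when $p<2$, so Lemma~\ref{lem_composition}'s $H^{s,0}$-bound cannot be applied verbatim to $N(u)-N(v)$. The remedy is to use Lemma~\ref{lem_elementary} for the $L^\infty$-type and low-regularity pieces and interpolate, accepting a bound of the form $\|\op{U}(\tau)^{-1}(N(u)-N(v))\|_{L^2} \le C(\|u\|_{L^\infty}+\|v\|_{L^\infty})^{p-1}\|\op{U}(\tau)^{-1}(u-v)\|_{L^2}$ in the weaker $L^2$ metric, which still suffices to close the contraction in $C([0,T);L^2)$ and hence gives uniqueness; persistence of the $\Sigma^s$-regularity then comes from the a priori estimate above. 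Since the excerpt explicitly says "The argument of this section is quite standard and any new idea is not needed", I would keep this part terse and simply cite the composition lemmas and Lemma~\ref{lem_elementary}, presenting the bootstrap computation as the main content.
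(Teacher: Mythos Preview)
Your proposal is correct and follows essentially the same route as the paper: both use the Duhamel/energy inequality together with Corollary~\ref{cor_composition} to get $E(T)\le \Phi\eps + C_* E(T)^{1+2\theta/d}T^{1-\theta}$ and then run a continuity argument, the only cosmetic difference being that the paper bootstraps from $3\Phi\eps$ down to $2\Phi\eps$ whereas you bootstrap from $2\Phi\eps$ down to $\tfrac{3}{2}\Phi\eps$. The paper simply cites \cite{Ca} for local well-posedness rather than discussing the H\"older-difference issue, so your extra paragraph on uniqueness in $L^2$ is more detail than the paper provides but is not needed.
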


\begin{proof} 
Since the local existence in $\Sigma^s$ 
is well-known (see e.g., \cite{Ca} and the references cited 
therein), 
what we have to do is to see the solution $u(t)$ stays bounded as long as 
$t$ is less than the expected value. 

Let $T>0$ and let $u(t)$ be the solution to \eqref{nls_d} in the time 
interval $[0,T)$. We set 
\[
 E(T)=\sup_{t\in[0,T)} \|\op{U}(t)^{-1} u(t)\|_{\Sigma^{s}}.
\]
Then, it follows form Corollary \ref{cor_composition} that 
\[
 \bigl\| \op{U}(t)^{-1}N(u) \bigr\|_{\Sigma^{s}}
 \le 
 \frac{CE(T)^{2\theta/d+1}}{(1+t)^{\theta}}
\]
for $t<T$. Therefore the standard energy integral method leads to 
\begin{align*}
 E(T)
&\le
 \|u(0)\|_{\Sigma^s} 
+ C\int_0^T \bigl\| \op{U}(t)^{-1}N(u) \bigr\|_{\Sigma^{s}}dt\\
&\le
 \eps \|\varphi\|_{\Sigma^s}
 + CE(T)^{2\theta/d+1}\int_0^T \frac{dt}{(1+t)^{\theta}}\\
 &\le
 \Phi \eps +C_*E(T)^{2\theta/d +1} T^{1-\theta},
\end{align*}
where the constant $C_*$ is independent of $\eps$ and $T$. 
With this $C_*$, we choose $D_0>0$ so that 
\[
 C_* 3^{1+2\theta/d} \Phi^{2\theta/d} {D_0}^{1-\theta}\le 1.
\]
Now we assume $E(T)\le 3\Phi\eps$. Then the above estimate yields 
\begin{align*}
 E(T)
\le 
 \Phi\eps + 
C_*(3\Phi\eps)^{2\theta/d +1} (D_0 \eps^{-2\theta/d(1-\theta)})^{1-\theta}
\le 
 2\Phi \eps
\end{align*}
if $T\le D_0 \eps^{-2\theta/d(1-\theta)}$. This shows that the solution $u(t)$ 
can exist as long as $t\le D_0 \eps^{-2\theta/d(1-\theta)}$.
In other words, we have $T_\eps\ge D_0 \eps^{-2\theta/d(1-\theta)}$. 
We also have the desired estimate \eqref{est_rough}. 
 \end{proof}

\begin{rmk}
In the proof of Proposition \ref{prp_rough}, we do not use any information on 
the sign of $\imagpart \lambda$.  
We need something more to clarify the dependence of $T_\eps$ on 
$\imagpart \lambda$, that is our main purpose of the present work.
\end{rmk}

\section{An ODE Lemma}  \label{sec_prelim}
In this section, we introduce an ODE lemma which will be used 
effectively in the next section.
The argument in this section is a modification of  that of \S 2 in 
\cite{SaSu} to fit for the present purpose.

Throughout this section, we always suppose $0<a<1$, $b>0$ and  
$\lambda\in \C$ with $\imagpart \lambda>0$. Let $\psi_0:\R^d\to \C$ 
be a continuous function satisfying
\[
\Psi_0 :=\sup_{\xi \in \R^d} |\psi_0(\xi)|<\infty.
\]
We set $q=\frac{b}{2(1-a)}$ and define $\tau_1 >0$ by 
\[
 \frac{1}{\tau_1}:= 
 \bigl(2q \imagpart \lambda {\Psi_0}^{b} \bigr)^{1/(1-a)}.
\]
For fixed $t_*>0$, let $\eta_0(t,\xi)$ be the solution to 
\begin{align}
\left\{\begin{array}{cl}
 \dis{i\pa_t \eta_0=\frac{\lambda}{t^{a}} |\eta_0|^{b}\eta_0}, 
 & t>t_*,\, \xi \in \R^d, \\
 \eta_0(t_*,\xi)=\eps \psi_0(\xi), & \xi \in \R^d,
\end{array}\right.
\label{ode_unperturbed}
\end{align}
where $\eps>0$ is a parameter. It is immediate to check that 
\[
 |\eta_0(t,\xi)|^{b}
 =
 \frac{(\eps |\psi_0(\xi)|)^{b}}
      {1+2q \imagpart \lambda |\psi_0(\xi)|^{b} \eps^b t_*^{1-a} 
 - 2q \imagpart \lambda |\psi_0(\xi)|^{b} \eps^{b} t^{1-a}}
\]
as long as the denominator is strictly positive. 
In view of this expression, we see that 
\begin{align}
 \sup_{(t,\xi)\in [t_*, \sigma \eps^{-2q}]\times \R^d}
 |\eta_0(t,\xi)| \le C_0 \eps
\label{est_eta_0}
\end{align}
for $\sigma \in (0, \tau_1)$, where 
\[C_0=\frac{\Psi_0}{\bigl(1-(\sigma/\tau_1)^{1-a} \bigr)^{1/b}}. 
\]
Next we consider a perturbation of \eqref{ode_unperturbed}. 
Let  $\uT>t_*$ and 
let $\psi_1:\R^d\to \C$, $\rho:[t_*, \uT)\times \R^d \to \C$ 
be continuous functions satisfying
\[
 |\psi_1(\xi)|\le C_1 \eps^{1+\delta}
\]
and 
\[
 |\rho(t,\xi)| \le \frac{C_2 \eps^{1+b+\delta}}{t^{a}}
\]
with some positive constants $C_1$, $C_2$ and $\delta>0$.
Let $\eta(t,\xi)$ be the solution to 
\begin{align*}
\left\{\begin{array}{cl}
 \dis{i\pa_t \eta=\frac{\lambda}{t^{a}} |\eta|^{b}\eta+\rho }, 
 & t\in (t_*, \uT),\ \xi \in \R^d, \\
 \eta(t_*,\xi)=\eps \psi_0(\xi)+\psi_1(\xi), & \xi \in \R^d.
\end{array}\right.
\end{align*}
The following lemma asserts that an estimate similar to 
\eqref{est_eta_0} remains valid if \eqref{ode_unperturbed} 
is perturbed by $\rho$ and $\psi_1$: 
\begin{lem} \label{lem_ODE} 
Let $\sigma \in (0,\tau_1)$ and let $\eta(t,\xi)$ be as above. 
We set $T_*=\min\{\uT,\sigma \eps^{-2q}\}$ for 
$0<\eps \le \min\{1, \sigma^{-1/q}, M^{-1/\delta} \}$. 
We have 
\[
 |\eta(t,\xi)| \le C_0\eps + M \eps^{1+\delta}
 \le (C_0+1)\eps
 \]
for $(t,\xi) \in [t_*,T_*)\times \R^d$, where 
\[
 M=2\left(C_1^2 +\frac{C_2^2}{2C_3}\right)^{1/2} 
   \exp\left( \frac{C_3\sigma^{1-a}}{2(1-a)} \right)
\]
with
\[
C_3=
2|\lambda|(b+1) (2C_0 +1)^{b} +\frac{1}{2}.
\]

\end{lem}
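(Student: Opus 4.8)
The plan is to run a bootstrap (continuity) argument on the quantity $N(t) := \sup_{\tau \in [t_*, t], \xi \in \R^d} |\eta(\tau,\xi) - \eta_0(\tau,\xi)|$, showing that it stays below $M\eps^{1+\delta}$ on $[t_*, T_*)$ so long as $\eps$ is small enough. First I would record the integral inequality obtained by subtracting the two ODEs: writing $w = \eta - \eta_0$, we have $i\pa_t w = \frac{\lambda}{t^a}\bigl(G_{b+1}(\eta) - G_{b+1}(\eta_0)\bigr) + \rho$ with $w(t_*,\xi) = \psi_1(\xi)$, so that for any fixed $\xi$,
\begin{align*}
 |w(t,\xi)| \le |\psi_1(\xi)| + \int_{t_*}^{t} \frac{|\lambda|}{\tau^a}\bigl|G_{b+1}(\eta) - G_{b+1}(\eta_0)\bigr|\, d\tau + \int_{t_*}^{t} |\rho(\tau,\xi)|\, d\tau.
\end{align*}
By Lemma~\ref{lem_elementary} (with exponent $p = b+1$), $|G_{b+1}(\eta) - G_{b+1}(\eta_0)| \le (b+1)(|\eta| + |\eta_0|)^{b}|w|$. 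Under the bootstrap hypothesis $|w| \le M\eps^{1+\delta} \le \eps$ (valid since $\eps \le M^{-1/\delta}$) together with \eqref{est_eta_0}, we get $|\eta| \le |\eta_0| + |w| \le C_0\eps + \eps \le (C_0+1)\eps$ and hence $|\eta| + |\eta_0| \le (2C_0+1)\eps$, so the kernel factor is bounded by $(b+1)(2C_0+1)^b \eps^b$.

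Next I would plug in the hypotheses $|\psi_1(\xi)| \le C_1\eps^{1+\delta}$ and $|\rho(\tau,\xi)| \le C_2\eps^{1+b+\delta}\tau^{-a}$, and use that on $[t_*, T_*)$ we have $\tau^{1-a} \le (\sigma\eps^{-2q})^{1-a} = \sigma^{1-a}\eps^{-2q(1-a)} = \sigma^{1-a}\eps^{-b}$ by the definition $q = b/(2(1-a))$. This turns $\eps^b \int_{t_*}^{\tau}\tau'^{-a}\,d\tau'$ into something of size $\eps^b \cdot \frac{\sigma^{1-a}\eps^{-b}}{1-a} = \frac{\sigma^{1-a}}{1-a}$, a bounded constant — this cancellation is the whole point of the exponent bookkeeping. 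Setting $f(t) := \sup_\xi |w(t,\xi)|$ and recalling the extra $\frac{1}{2}$ built into $C_3$, the inequality becomes, roughly,
\begin{align*}
 f(t) \le C_1\eps^{1+\delta} + \frac{C_2 \sigma^{1-a}}{2(1-a)}\,\eps^{1+\delta}\cdot\frac{1}{C_3} \cdot (\text{something}) + \int_{t_*}^{t}\frac{|\lambda|(b+1)(2C_0+1)^b}{\tau^a} f(\tau)\,d\tau,
\end{align*}
and the cleanest route is to absorb the constants into the Grönwall form: one shows $f(t) \le \bigl(C_1^2 + \frac{C_2^2}{2C_3}\bigr)^{1/2}\eps^{1+\delta}\exp\bigl(\int_{t_*}^t \frac{C_3}{2\tau^a}d\tau\bigr)$ by a differential-inequality / Grönwall argument tailored to the weight $\tau^{-a}$ (this is where the precise algebraic form of $M$, with its square-root and the factor $\exp(C_3\sigma^{1-a}/(2(1-a)))$, is pinned down — it is essentially the weighted-Grönwall constant coming from $\int_{t_*}^{T_*}\tau^{-a}d\tau \le \sigma^{1-a}/(1-a)$). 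This yields $f(t) \le M\eps^{1+\delta}$, which closes the bootstrap since $M\eps^{1+\delta}$ is strictly less than the initial bootstrap threshold once $\eps$ is small; the triangle inequality $|\eta| \le |\eta_0| + |w| \le C_0\eps + M\eps^{1+\delta}$ then gives the claim.

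The main obstacle, and the step deserving the most care, is getting the constants to line up exactly as in the stated $M$ and $C_3$: the Grönwall estimate must be run so that the quadratic combination $C_1^2 + C_2^2/(2C_3)$ emerges naturally (the standard trick is to work with $g(t) = f(t)^2$ or to complete a square when handling the inhomogeneous term $\rho$, using Young's inequality $2ab \le C_3 a^2 + b^2/C_3$ to split the $\rho$-contribution against the linear term, which is precisely why the $\frac12$ is added to $C_3$). One also has to be slightly careful that the bootstrap set $\{t \in [t_*, T_*) : f(t) \le M\eps^{1+\delta}\}$ is nonempty (it contains $t_*$ since $f(t_*) = \sup_\xi|\psi_1(\xi)| \le C_1\eps^{1+\delta} < M\eps^{1+\delta}$), closed and open in $[t_*,T_*)$, hence all of it — this uses continuity of $\eta$ and $\eta_0$ in $t$. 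Everything else is routine once the weighted-integral bookkeeping via $q = b/(2(1-a))$ is in place.
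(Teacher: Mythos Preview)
Your proposal is correct and follows essentially the same route as the paper: a continuity argument on $w=\eta-\eta_0$, the Lipschitz bound from Lemma~\ref{lem_elementary}, and the key cancellation $\eps^{b}\int_{t_*}^{T_*}\tau^{-a}\,d\tau\le \sigma^{1-a}/(1-a)$ coming from $q=b/(2(1-a))$. The paper carries out the Gr\"onwall step exactly via the trick you mention at the end: it works with the shifted square $f(t,\xi)=|w|^{2}+\tfrac{C_2^{2}}{2C_3}\eps^{2+2\delta}$, derives $\pa_t f\le \tfrac{C_3\eps^{b}}{t^{a}}f$ (using Young to absorb the $|\rho||w|$ term, which is why the extra $\tfrac12$ appears in $C_3$), and integrates to get $|w|\le \sqrt{f}\le (M/2)\eps^{1+\delta}$ --- the factor $1/2$ is what strictly improves the bootstrap bound, so your sentence ``$M\eps^{1+\delta}$ is strictly less than the initial bootstrap threshold'' should read $(M/2)\eps^{1+\delta}<M\eps^{1+\delta}$.
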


\begin{proof}
We set $w=\eta-\eta_0$ and 
\[
 T_{**}=\sup\Bigl\{ \tilde{T} \in [t_*, T_*)\, 
 \Bigm|\, 
   \sup_{(t,\xi) \in [t_*, \tilde{T})\times \R^d} 
    |w(t,\xi)| \le M\eps^{1+\delta}  
 \Bigr\}.
\]
We observe that  
\[
i\pa_t w=\frac{\lambda}{t^a}
 \Bigl(|\eta_0+w|^{b}(\eta_0+w) - |\eta_0|^{b}\eta_0\Bigr)
 + \rho,
 \quad w(t_*,\xi)=\psi_1(\xi).
\]
We also note that $T_{**}>t_{*}$, because of the estimate 
\[
 |w(t_*,\xi)|=|\psi_1(\xi)| \le C_1\eps^{1+\delta}
 \le \frac{M}{2} \eps^{1+\delta}
\]
and the continuity of $w$. 
Now we set 
\[
f(t,\xi)= |w(t,\xi)|^2+\frac{C_2^2}{2C_3} \eps^{2+2\delta}.
\]
Then it follows from Lemma \ref{lem_elementary} that
\begin{align*}
 \pa_t f(t,\xi)
 =&
 2 \imagpart\Bigl(i\pa_t w \cdot \overline{w}\Bigr)\\
 \le&
 \frac{2|\lambda|}{t^a} (b+1) \Bigl(2|\eta_0| + |w| \Bigr)^{b} |w|^2
 +|\rho||w| \\
 \le& 
  \frac{2|\lambda|(b+1)}{t^a} \Bigl(2C_0 \eps + M\eps^{1+\delta} \Bigr)^{b} 
 |w|^2
 + |w|\cdot \frac{C_2\eps^{1+b+\delta}}{t^a}\\
 \le& 
  \frac{\eps^{b}}{t^a} \biggl\{\Bigl(C_3-\frac{1}{2}\Bigr) |w|^2
 + |w|\cdot C_2\eps^{1+\delta} \biggr\}\\ 
 \le& 
  \frac{\eps^{b}}{t^a} \biggl(C_3 |w|^2
 +\frac{C_2^2}{2} \eps^{2+2\delta} \biggr)\\ 
 =&
 \frac{C_3 \eps^{b}}{t^a} f(t,\xi)
\end{align*}
for $t\in (t_*, T_{**})$, as well as 
\[
 f(t_*,\xi)
 \le
  (C_1 \eps^{1+\delta})^2+ \frac{C_2^2}{2C_3} \eps^{2+2\delta}
 \le 
 \left(C_1^2+\frac{C_2^2}{2C_3}\right)\eps^{2+2\delta}.
\]
These lead to
\begin{align*}
 f(t,\xi) 
 &\le 
 f(t_*,\xi)
 \exp
 \left(
  \int_{t_*}^{\sigma \eps^{-2q}} \frac{C_3 \eps^{b}}{\tau^a}\, d\tau
 \right)\\
 &\le 
 \left(C_1^2+\frac{C_2^2}{2C_3}\right)\eps^{2+2\delta}
 \exp
 \left(
  \frac{C_3 \sigma^{1-a}}{1-a} \eps^{b-2q(1-a)}
  \right)\\ 
&\le
 \left(\frac{M}{2}\eps^{1+\delta}\right)^2,
\end{align*}
whence
\[
 |w(t,\xi)| \le \sqrt{f(t,\xi)} 
 \le
\frac{M}{2}\eps^{1+\delta}
\]
for $(t,\xi) \in [t_*, T_{**})\times \R^d$. 
This contradicts the definition of $T_{**}$ if $T_{**}$ is strictly less than 
$T_*$. Therefore we conclude that  $T_{**}=T_*$. In other words, we have 
\[
 \sup_{(t,\xi) \in [t_*, T_{*})\times \R^d}|w(t,\xi)| \le \sqrt{f(t,\xi)} 
 \le
 M\eps^{1+\delta}.
\]
Going back to the definition of $w$, we have
\[
 |\eta(t,\xi)| 
 \le 
  |\eta_0(t,\xi)| +|w(t,\xi)| 
 \le
  C_0\eps + M\eps^{1+\delta}
\]
for $(t,\xi) \in [t_*, T_{*})\times \R^d$, 
as desired.
\end{proof}

\section{Bootstrap argument in the large time}  
\label{sec_bootstrap}
Now we are ready to pursue the behavior of the solution 
$u(t)$ of \eqref{nls_d} for $t\gtrsim o(\eps^{-2\theta/d(1-\theta)})$.
For this purpose, we set $t_*=\eps^{-\theta/(1-\theta)d}$, and 
let $\eps$ be small enough to satisfy $\eps^{\theta/(1-\theta)d}< D_0$. 
Then, since $t_*\le D_0 \eps^{-2\theta/(1-\theta)d}$, 
Proposition \ref{prp_rough} gives us 
$E(t_*)\le 2\Phi \eps$. 
Next we set 
\[
 \tau_0:= 
 \left(\frac{(1-\theta)d}
 {\dis{2\theta \imagpart \lambda \sup_{\xi \in \R^d} 
   |\hat{\varphi}(\xi)|^{2\theta/d}}}\right)^{1/(1-\theta)}
\]
and fix $\sigma \in (0,\tau_0)$, 
$T \in (t_*, \sigma \eps^{-2\theta/d(1-\theta)}]$. 
Note that the right-hand side in \eqref{goal} is equal to $\tau_0^{1-\theta}$. 
For the solution $u(t)$ in the interval $t \in [0,T)$,  we put 
\[
 E(T)=\sup_{t\in [0,T)}  \|\op{U}(t)^{-1}u(t)\|_{\Sigma^{s}} 
\]
as in the proof of Proposition \ref{prp_rough}. 
The following lemma is the main step toward Theorem~\ref{thm_main}.

\begin{lem} \label{lem_apriori}
Let $\sigma$ and $T$  be as above.
Then there exist  constants $\eps_0>0$ and $K>4\Phi$, 
which are independent of $T$, such that
the estimate $E(T)\le K\eps$ 
implies the better estimate 
$E(T)\le K\eps/2$ if $\eps \in (0,\eps_0].$
\end{lem}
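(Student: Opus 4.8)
The plan is to run a bootstrap (continuity) argument on $[t_*,T)$, upgrading the hypothesis $E(T)\le K\eps$ to $E(T)\le K\eps/2$. The key is to transfer the problem from the PDE to the ODE treated in Lemma~\ref{lem_ODE}. First I would introduce the profile $A(t,\xi)=\op{F}\bigl[\op{U}(t)^{-1}u(t,\cdot)\bigr](\xi)$ and derive its evolution equation. Using the factorization $\op{U}(t)^{-1}=\op{F}^{-1}\op{M}(t)\op{F}\cdots$ (equivalently, the identity $i\pa_t\bigl(\op{U}(t)^{-1}u\bigr)=\op{U}(t)^{-1}N(u)$ followed by an application of $\op{F}$ together with Lemma~\ref{lem_remainder}), I expect to obtain a reduced ODE of the form
\begin{align}
 i\pa_t A(t,\xi)
 =
 \frac{\lambda}{t^{\theta}}\,|A(t,\xi)|^{2\theta/d} A(t,\xi)
 + R(t,\xi),
\label{reduced_ODE_plan}
\end{align}
where the remainder $R$ collects the difference between $\op{F}\op{U}(t)^{-1}N(u)$ and the leading term $t^{-d(p-1)/2}G_{p}(\op{F}\op{U}(t)^{-1}u)$ with $p=1+2\theta/d$ (so $d(p-1)/2=\theta$). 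By Lemma~\ref{lem_remainder} with a suitable $\gamma\in(0,1/2]$ satisfying $d/2+2\gamma<s$ (this is where \eqref{index} is used: such a $\gamma$ exists precisely when $s>d/2$ and $s<2$), together with the bootstrap hypothesis $E(T)\le K\eps$ and Lemma~\ref{lem_wSob}, I get $\|R(t,\cdot)\|_{L^\infty}\le C t^{-\theta-\gamma}(K\eps)^{1+2\theta/d}\le C_2\eps^{1+2\theta/d+\delta}t^{-\theta}$ for an appropriate small $\delta>0$, once $t\ge t_*=\eps^{-\theta/(1-\theta)d}$ so that $t^{-\gamma}\le \eps^{\gamma\theta/(1-\theta)d}=:\eps^{\delta}$ (choosing $\delta=\gamma\theta/(1-\theta)d$). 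This matches the hypothesis on $\rho$ in Section~\ref{sec_prelim} with $a=\theta$, $b=2\theta/d$, hence $q=\frac{b}{2(1-a)}=\frac{\theta}{(1-\theta)d}$, and $\tau_1=\tau_0$ since $\Psi_0=\sup_\xi|\hat\varphi(\xi)|$ (modulo the $O(\eps^{1+\delta})$ discrepancy between $A(t_*,\xi)$ and $\eps\hat\varphi(\xi)$, which plays the role of $\psi_1$; see below).

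Next I would set $\psi_0=\hat\varphi$ and $\psi_1(\xi)=A(t_*,\xi)-\eps\hat\varphi(\xi)$, and check $\|\psi_1\|_{L^\infty}\le C_1\eps^{1+\delta}$. This estimate quantifies how close the profile at time $t_*$ is to its initial value: one writes $\op{U}(t_*)^{-1}u(t_*)-\varphi\eps=-i\int_0^{t_*}\op{U}(\tau)^{-1}N(u(\tau))\,d\tau$, applies $\op{F}$, and bounds the $L^\infty$-norm via Lemma~\ref{lem_wSob} (or directly via Corollary~\ref{cor_composition}) using the rough bound $E(\tau)\le 2\Phi\eps$ from Proposition~\ref{prp_rough} valid on $[0,t_*]$; the $\tau$-integral of $(1+\tau)^{-\theta}$ over $[0,t_*]$ gives $Ct_*^{1-\theta}=C\eps^{-\theta/d}$, so the bound is $C\eps^{1+2\theta/d}\cdot\eps^{-\theta/d}=C\eps^{1+\theta/d}$, which is $\le C_1\eps^{1+\delta}$ provided $\delta\le\theta/d$ (shrink $\delta$ if necessary). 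With $\psi_0,\psi_1,\rho,\uT=T$ in hand, Lemma~\ref{lem_ODE} (with $\sigma\in(0,\tau_0)$ as fixed, $T_*=\min\{T,\sigma\eps^{-2q}\}=T$) yields $\sup_{(t,\xi)\in[t_*,T)\times\R^d}|A(t,\xi)|\le (C_0+1)\eps$, i.e., an $L^\infty$-bound on $\op{F}\op{U}(t)^{-1}u(t)$.

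Finally I would convert this $L^\infty$-control of the profile back into $\Sigma^s$-control to close the bootstrap. Writing the Duhamel formula for $\op{U}(t)^{-1}u$, differentiating the energy $\|\op{U}(t)^{-1}u(t)\|_{\Sigma^s}$, and applying Corollary~\ref{cor_composition} but now with the \emph{sharp} $L^\infty$-bound $\|u(t)\|_{L^\infty}=\|\op{U}(t)^{-1}u(t)\|_{L^\infty}\le$ (using Lemma~\ref{lem_Linfty}) $t^{-d/2}\|A(t,\cdot)\|_{L^\infty}+Ct^{-d/2-\gamma}E(t)\le C(C_0+1)\eps\, t^{-d/2}+\cdots$ in place of the crude $E(T)$, one controls $\|\op{U}(t)^{-1}N(u)\|_{\Sigma^s}\le C\eps^{2\theta/d}t^{-\theta}E(t)$. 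Then
\begin{align*}
 E(T)
 &\le
 E(t_*) + C\int_{t_*}^{T}\eps^{2\theta/d}t^{-\theta}E(t)\,dt
 \le
 2\Phi\eps + C\eps^{2\theta/d}\cdot\frac{(\sigma\eps^{-2q})^{1-\theta}}{1-\theta}\,E(T)
 =
 2\Phi\eps + \frac{C\sigma^{1-\theta}}{1-\theta}E(T),
\end{align*}
using $2q(1-\theta)=2\theta/d$; choosing $K=8\Phi$ (say) and then $\eps_0$ small and, if needed, adjusting so that $\frac{C\sigma^{1-\theta}}{1-\theta}\le\frac14$ — note $\sigma<\tau_0$ is \emph{fixed}, so this requires tracking that the constant $C$ here does not itself blow up with $\sigma$; this is fine since $C$ is the constant from Corollary~\ref{cor_composition}, independent of $\sigma$ — gives $E(T)\le 2\Phi\eps+\frac14 K\eps\le\frac12 K\eps$. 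Since $T<\sigma\eps^{-2\theta/d(1-\theta)}$ was arbitrary and $\sigma<\tau_0$ arbitrary, the standard continuity argument then forces $T_\eps\ge\sigma\eps^{-2\theta/d(1-\theta)}$ for every $\sigma<\tau_0$, which is \eqref{goal}.

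The main obstacle I anticipate is the bookkeeping in the second paragraph: one must simultaneously (i) pick $\gamma$ with $d/2+2\gamma<s<\min\{2,p\}$ — possible exactly under \eqref{index} — (ii) ensure the resulting decay rate $\theta+\gamma$ in the remainder, after using $t\ge t_*$, produces a genuine positive power $\eps^\delta$ of smallness, and (iii) verify the $\psi_1$-estimate with the \emph{same} $\delta$ (or a smaller one, uniformly). Keeping $\delta>0$ while matching $a=\theta$, $b=2\theta/d$ to the hypotheses of Lemma~\ref{lem_ODE}, and making sure the choice $t_*=\eps^{-\theta/(1-\theta)d}$ is exactly the one that makes all three work, is the delicate point; everything else is a routine energy estimate.
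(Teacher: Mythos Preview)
Your overall strategy matches the paper's: reduce to the profile ODE for $A(t,\xi)=\op{F}[\op{U}(t)^{-1}u(t,\cdot)](\xi)$, estimate the remainder $R$ and the initial discrepancy $\psi_1$ via Lemma~\ref{lem_remainder} and Proposition~\ref{prp_rough}, apply Lemma~\ref{lem_ODE} to control $\|A(t,\cdot)\|_{L^\infty}$, and feed this back via Lemma~\ref{lem_Linfty} to get the sharp decay $\|u(t)\|_{L^\infty}\le C\eps\,t^{-d/2}$ on $[t_*,T)$. All of that is correct and essentially identical to the paper's proof.

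The gap is in your last step, where you close the bootstrap on $E(T)$. You write
\[
 E(T)\le 2\Phi\eps + \frac{C\sigma^{1-\theta}}{1-\theta}\,E(T)
\]
and then claim you can ``adjust'' so that $\frac{C\sigma^{1-\theta}}{1-\theta}\le\frac14$. This fails on two counts. First, the constant $C$ here is \emph{not} just the constant from Corollary~\ref{cor_composition}: it carries the factor $(C_0+1)^{2\theta/d}$ coming from the $L^\infty$-bound $\|u(t)\|_{L^\infty}\le C(C_0+1)\eps\,t^{-d/2}$, and $C_0=\Psi_0\bigl(1-(\sigma/\tau_0)^{1-\theta}\bigr)^{-d/2\theta}$ blows up as $\sigma\uparrow\tau_0$. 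Second, even if $C$ were independent of $\sigma$, neither $C$ nor $\sigma$ depends on $\eps$, so shrinking $\eps_0$ does nothing; and since the whole point of the lemma is to cover \emph{every} $\sigma\in(0,\tau_0)$, you cannot restrict to small $\sigma$ either. A linear absorption $E(T)\le 2\Phi\eps+cE(T)$ simply cannot be closed once $c\ge 1$, and here there is no mechanism forcing $c<1$.

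The paper circumvents this by using the Gronwall form of the energy inequality instead: from $\frac{d}{dt}\|\op{U}(t)^{-1}u(t)\|_{\Sigma^s}\le C\|u(t)\|_{L^\infty}^{2\theta/d}\|\op{U}(t)^{-1}u(t)\|_{\Sigma^s}$ (via Lemma~\ref{lem_composition}) one gets
\[
 \sup_{t_*\le t<T}\|\op{U}(t)^{-1}u(t)\|_{\Sigma^s}
 \le
 \|\op{U}(t_*)^{-1}u(t_*)\|_{\Sigma^s}
 \exp\!\left(C\eps^{2\theta/d}\!\int_{t_*}^{\sigma\eps^{-2\theta/d(1-\theta)}}\frac{dt}{t^\theta}\right)
 \le
 2\Phi\,e^{C_\star}\,\eps,
\]
and then one \emph{defines} $K=4\Phi e^{C_\star}$. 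The exponential happily absorbs an arbitrarily large $C_\star$ (which does depend on $\sigma$), whereas your linear inequality cannot. Replacing your final displayed estimate by this Gronwall step repairs the argument.
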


\begin{proof}
It  suffices to consider $t\in  [t_*,T)$, because we already know that 
$E(t_*)\le 2\Phi \eps$. For $t\in  [t_*,T)$, we set 
$A(t,\xi)=\op{F} \bigl[\op{U}(t)^{-1} u(t,\cdot)\bigr](\xi)$ 
and 
\[
 R(t,\xi)=\op{F} \bigl[\op{U}(t)^{-1} N(u(t,\cdot))\bigr](\xi)
 - t^{-\theta} N(A(t,\xi))
\]
so that 
\begin{align}
 i\pa_t A
 =
 \op{F} \op{U}(t)^{-1} \op{L}u
 =
 \op{F} \op{U}(t)^{-1} N(u)
 =
 \frac{\lambda}{t^{\theta}}|A|^{2\theta/d} A +R.
\label{reduced_ODE}
\end{align}
Next we take $\gamma=(2s-d)/8 \in (0,1/2]$. 
Note that $s-d/2=4\gamma >2\gamma$. 
Since $R$ can be written as
\[
 R(t,\xi)=\lambda\Bigl(
 \op{F} \op{U}(t)^{-1} G_{1+2\theta/d}(u)
 - t^{-\theta} G_{1+2\theta/d}(\op{F} \op{U}(t)^{-1}u)
 \Bigr),
\]
Lemma \ref{lem_remainder} yields 
\[
 |R(t,\xi)| 
\le 
 \frac{C}{t^{\theta+\gamma}}E(T)^{2\theta/d+1}
\le
 \frac{C \eps^{1+2\theta/d}}{t^{\theta}} K^{1+2\theta/d} t_*^{-\gamma}
\le
 \frac{C \eps^{1+2\theta/d+\gamma \theta/2d(1-\theta)}}{t^{\theta}}
\]
if $E(T)\le K\eps$ and 
$K^{1+2\theta/d} \eps^{\gamma\theta/2d(1-\theta)}\le 1$. 
Moreover, when we put 
$\psi(\xi)=A(t_*,\xi) -\eps \hat{\varphi}(\xi)$, we have 
\begin{align*}
|\psi(\xi)| 
&\le 
C\|\op{U}(t_*)^{-1}u(t_*,\cdot) - \eps \varphi\|_{H^{0,s}}
\\
&\le
C \int_0^{t_*} \|\op{U}(t)^{-1} N(u(t))\|_{H^{0,s}}\, dt\\
&\le
C\int_0^{\eps^{-\theta/(1-\theta)d}} 
 \frac{(2\Phi \eps)^{1+2\theta/d}}{(1+t)^{\theta}}\, dt\\
&\le
C\eps^{1+2\theta/d} 
\int_0^{\eps^{-\theta/(1-\theta)d}} \frac{dt}{(1+t)^{\theta}}\\
&\le
C\eps^{1+\theta/d},
\end{align*}
where we have used Lemma \ref{lem_composition}, Lemma \ref{lem_wSob} 
and Proposition \ref{prp_rough}. 
Therefore we can apply Lemma \ref{lem_ODE} 
with $\eta=A$, $a=\theta$, $b=2\theta/d$, 
$\delta=\min\{\theta/d, \gamma \theta/2d(1-\theta)\}$, 
$\psi_0=\hat{\varphi}$ and $\rho=R$ 
to obtain 
\[
 |A(t,\xi)|\le (C_{0}+1)\eps 
\]
for $(t,\xi) \in [t_*,T)\times \R^d$, where 
\[
C_0 =
 \frac{\sup_{\xi \in \R^d} |\hat{\varphi}(\xi)|}
 {(1-(\sigma/\tau_0)^{1-\theta})^{d/2\theta}}.
\]
Note that $C_0$ is independent of $\eps$, $K$ and $T$. 
By this estimate and Lemma \ref{lem_Linfty}, we have 
\begin{align*}
 \|u(t)\|_{L^{\infty}}
 &\le
 t^{-d/2}\|A(t,\cdot)\|_{L^{\infty}}
 +Ct^{-d/2-\gamma} \|\op{U}(t)^{-1}u(t)\|_{\Sigma^{s}}\\
 &\le 
 t^{-d/2} \Bigl( (C_0+1)\eps
 +CK\eps t_*^{-\gamma}\Bigr)\\
 &\le
  t^{-d/2} \Bigl(C\eps + CK\eps^{1+\gamma\theta/d(1-\theta)}\Bigr)\\
 &\le 
 C\eps t^{-d/2}, 
\end{align*}
if $K\eps^{\gamma\theta/d(1-\theta)}\le 1$. 
By the standard energy inequality combined with Lemma \ref{lem_composition}, 
we obtain 
\begin{align*}
\sup_{t_*\le t< T} \|\op{U}(t)^{-1}u(t)\|_{\Sigma^{s}}
 &\le
   \|\op{U}(t_*)^{-1}u(t_*)\|_{\Sigma^{s}} 
 \exp\left(\int_{t_*}^{T}C\|u(t)\|_{L^{\infty}}^{2\theta/d}dt \right)\\
 &\le
 2\Phi \eps 
 \exp\left(C\eps^{2\theta/d} 
 \int_{0}^{\sigma \eps^{-2\theta/d(1-\theta)}}\frac{dt}{t^\theta} 
\right)\\
 &\le
  \bigl(2\Phi e^{C_{\star}} \bigr) \eps
\end{align*}
for $t\in [t_*,T)$, where the constant $C_{\star}$ is independent of $\eps$, 
$K$ and $T$. 
Now we set $K=4\Phi e^{C_{\star}}$. Then we arrive at the desired estimate 
$E(T) \le K\eps /2$.
\end{proof}

\noindent
{\em Proof of Theorem \ref{thm_main}.}\ 
Let $T_{\eps}$ be the lifespan defined in the statement of 
Theorem \ref{thm_main}. 
We fix $\sigma \in (0, \tau_0)$ and set 
\[
 T^*=\sup\bigl\{ t \in [0,T_{\eps})\, \bigm|\, E(t)\le K\eps \bigr\},
\]
where $K$ is given in Lemma \ref{lem_apriori}. 
Now we assume 
$T^* \le \sigma \eps^{-2\theta/d(1-\theta)}$. 
Then, Lemma \ref{lem_apriori} 
with $T=T^*$ implies $E(T^*)\le K\eps/2$ if $\eps\le \eps_0$. 
By the continuity of $[0,T_{\eps})\ni T\mapsto E(T)$, we can 
choose $\tilde{\delta}>0$ such that $E(T^*+\tilde{\delta})\le K\eps$, 
which contradicts the definition of $T^*$. Therefore we must have 
$T^* \ge \sigma \eps^{-2\theta/d(1-\theta)}$ if $\eps\le \eps_0$. 
As a consequence, we obtain
\[
 \liminf_{\eps \to +0} \eps^{2\theta/d} {T_{\eps}}^{1-\theta}
 \ge 
 \sigma^{1-\theta}.
\]
Since $\sigma \in (0, \tau_0)$ is arbitrary, we arrive at the 
desired estimate \eqref{goal}.
\qed

\section{The critical case}  \label{sec_critical}
We conclude this paper with a few comments on the critical case $\theta=1$, 
that is, 
\begin{align}
\left\{\begin{array}{cl}
 \dis{i\pa_t u+\frac{1}{2}\Delta u=\lambda |u|^{2/d}u}, & t>0, \ x\in \R^d,\\
 u(0,x)=\eps \varphi(x), &x\in \R^d,
\end{array}\right.
\label{nls_c}
\end{align}
with $\imagpart \lambda>0$. 
As mentioned in the introduction, 
one dimensional case ($d=1$) has been covered in the previous works 
\cite{Su}, \cite{SaSu}. 
Minor modifications of the method in the previous sections 
allow us to treat the case of $d=2$, $3$. 

\begin{thm} \label{thm_critical}
Let $1\le d\le 3$ and $\lambda\in \C$ with $\imagpart \lambda >0$. 
Assume $\varphi \in \Sigma^{s}$ with $s$ satisfying \eqref{index}. 
Let $T_{\eps}$ be the supremum of $T>0$ such that \eqref{nls_c} admits a 
unique solution  $u$ satisfying $\op{U}(\cdot)^{-1}u\in C([0,T); \Sigma^{s})$. 
Then we have
\[
 \liminf_{\eps\to +0} \left( \eps^{2/d} \log T_{\eps} \right)
 \ge 
 \frac{d}
 {\dis{2 \imagpart \lambda \sup_{\xi \in \R^d}|\hat{\varphi}(\xi)|^{2/d} }}.
\]
\end{thm}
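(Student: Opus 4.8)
For $d=1$ the assertion is contained in \cite{Su} and \cite{SaSu}, so the point is $d\in\{2,3\}$, although the argument below is uniform in $1\le d\le 3$. The plan is to run the three-step scheme of the proof of Theorem~\ref{thm_main}, replacing the power weight $t^{1-\theta}$ by the logarithmic weight $\log t$ everywhere, since at $\theta=1$ the integral $\int^{t}\tau^{-\theta}\,d\tau$ diverges only logarithmically. First, the energy argument of Section~\ref{sec_rough} with $\theta=1$ gives $E(T)\le\Phi\eps+C_*E(T)^{1+2/d}\log(1+T)$, so there is $D_0>0$ with $T_{\eps}\ge e^{D_0\eps^{-2/d}}$ and $\|\op{U}(t)^{-1}u(t)\|_{\Sigma^{s}}\le2\Phi\eps$ as long as $\log(1+t)\le D_0\eps^{-2/d}$; this is the critical version of Proposition~\ref{prp_rough}. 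I then fix some $\mu>0$ and set $t_*=\eps^{-\mu}$, which lies well inside this rough lifespan, so $E(t_*)\le2\Phi\eps$ for $\eps$ small, and $\eps^{2/d}\log t_*\to0$, so $t_*$ will not erode the sharp constant.

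The core is the $\theta=1$ analogue of Lemma~\ref{lem_ODE}. For $b=2/d$ the unperturbed equation $i\pa_t\eta_0=\frac{\lambda}{t}|\eta_0|^{b}\eta_0$ with $\eta_0(t_*,\xi)=\eps\psi_0(\xi)$ is solved explicitly by
\[
 |\eta_0(t,\xi)|^{b}
 =\frac{(\eps|\psi_0(\xi)|)^{b}}
       {1-b\,\imagpart\lambda\,(\eps|\psi_0(\xi)|)^{b}\log(t/t_*)},
\]
so, with $\tau_1:=(b\,\imagpart\lambda\,\Psi_0^{b})^{-1}$ and $\sigma\in(0,\tau_1)$, one gets $|\eta_0(t,\xi)|\le C_0\eps$ for $t_*\le t\le t_*e^{\sigma\eps^{-b}}$, where now $C_0=\Psi_0/(1-\sigma/\tau_1)^{1/b}$ and no power of $1-a$ survives in the exponent. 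The perturbation step is verbatim that of the proof of Lemma~\ref{lem_ODE}: for $w=\eta-\eta_0$ and $f=|w|^2+\frac{C_2^2}{2C_3}\eps^{2+2\delta}$ one gets $\pa_t f\le\frac{C_3\eps^{b}}{t}f$, and integrating over $[t_*,t_*e^{\sigma\eps^{-b}})$ produces the bounded factor $\exp(C_3\eps^{b}\cdot\sigma\eps^{-b})=e^{C_3\sigma}$ in place of the power-type factor in the definition of $M$; the bootstrap on $T_{**}$ is unchanged, and one obtains $|\eta(t,\xi)|\le C_0\eps+M\eps^{1+\delta}$ on $[t_*,\min\{\uT,t_*e^{\sigma\eps^{-b}}\})\times\R^d$ with $M$ adjusted accordingly.

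The last step repeats Section~\ref{sec_bootstrap}. Set $\tau_0=\frac{d}{2\,\imagpart\lambda\,\sup_{\xi}|\hat\varphi(\xi)|^{2/d}}$, fix $\sigma\in(0,\tau_0)$ and $T\in(t_*,t_*e^{\sigma\eps^{-2/d}}]$, and put $A(t,\xi)=\op{F}[\op{U}(t)^{-1}u(t)]$, which by \eqref{reduced_ODE} solves $i\pa_t A=\frac{\lambda}{t}|A|^{2/d}A+R$. With $\gamma=(2s-d)/8$, Lemma~\ref{lem_remainder} applied with $p=1+2/d$ (so that $d(p-1)/2=1$) gives $|R(t,\xi)|\le Ct^{-1-\gamma}E(T)^{1+2/d}\le C\eps^{1+2/d+\delta}t^{-1}$ on $[t_*,T)$ whenever $E(T)\le K\eps$ and $\eps$ is small, while integrating the nonlinearity over $[0,t_*]$ exactly as in the proof of Lemma~\ref{lem_apriori} gives $|A(t_*,\xi)-\eps\hat\varphi(\xi)|\le C\eps^{1+2/d}\log(1+t_*)\le C\eps^{1+\delta}$; here $\delta>0$ is a fixed small exponent determined by $\gamma$ and $\mu$. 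Then the critical ODE lemma applies with $\eta=A$, $a=1$, $b=2/d$, $\psi_0=\hat\varphi$ and $\rho=R$, yielding $|A(t,\xi)|\le(C_0+1)\eps$ on $[t_*,T)\times\R^d$ with $C_0=\sup_{\xi}|\hat\varphi(\xi)|/(1-\sigma/\tau_0)^{d/2}$. Lemma~\ref{lem_Linfty} then gives $\|u(t)\|_{L^{\infty}}\le C\eps\,t^{-d/2}$, and the energy inequality with Lemma~\ref{lem_composition} closes the bootstrap since $\int_{t_*}^{t_*e^{\sigma\eps^{-2/d}}}C\eps^{2/d}t^{-1}\,dt=C\sigma$ is bounded; one takes $K=4\Phi e^{C_{\star}}$ with $C_{\star}$ independent of $\eps$, $K$, $T$. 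This is the critical version of Lemma~\ref{lem_apriori}, and the final limiting argument is identical to that for Theorem~\ref{thm_main}: from $T^*:=\sup\{t\in[0,T_{\eps}):E(t)\le K\eps\}\ge t_*e^{\sigma\eps^{-2/d}}$ one obtains $\eps^{2/d}\log T_{\eps}\ge\eps^{2/d}\log t_*+\sigma\to\sigma$ as $\eps\to+0$, and letting $\sigma\uparrow\tau_0$ gives the assertion.

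The main obstacle is mild: beyond bookkeeping, the only new work is re-deriving Proposition~\ref{prp_rough} and Lemma~\ref{lem_ODE} at the endpoint $a=1$, where $\int_{t_*}^{T_*}\tau^{-1}\,d\tau=\log(T_*/t_*)$ takes the place of $(T_*^{1-a}-t_*^{1-a})/(1-a)$. The point to watch is that the seed $|A(t_*,\cdot)-\eps\hat\varphi|$ now carries an extra factor $\log(1+t_*)$ and that $R$ improves only by $t_*^{-\gamma}$; both are harmless precisely because $t_*=\eps^{-\mu}$ is a small negative power of $\eps$, so $\eps^{2/d}\log t_*\to0$ preserves the sharp constant $\tau_0$ while $t_*^{-\gamma}=\eps^{\gamma\mu}$ still supplies the positive power $\eps^{\delta}$ needed by the ODE lemma.
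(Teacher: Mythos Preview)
Your proof is correct and follows precisely the approach the paper itself indicates: the paper gives only a sketch for Theorem~\ref{thm_critical}, noting that minor modifications of the proof of Theorem~\ref{thm_main} suffice and identifying the explicit ODE profile at $\theta=1$, and you have carried out exactly those modifications. The one choice you had to make---replacing the subcritical $t_*=\eps^{-\theta/d(1-\theta)}$, which degenerates at $\theta=1$, by an arbitrary power $t_*=\eps^{-\mu}$---is natural, and your observation that $\eps^{2/d}\log t_*\to 0$ is precisely what preserves the sharp constant $\tau_0$.
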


Since the proof is almost the same as that for Theorem \ref{thm_main}, 
we only point out where this lower bound comes from. 
As in Section \ref{sec_bootstrap}, we can see that 
$A(t,\xi)=\op{F} \bigl[\op{U}(t)^{-1} u(t,\cdot)\bigr](\xi)$ satisfies 
\begin{align*}
 i\pa_t A
 =
 \frac{\lambda}{t}|A|^{2/d} A +R
\end{align*}
with $A(1,\xi)=\eps \hat{\varphi}(\xi)+\psi(\xi)$,
where $R$ and $\psi$ are regarded as remainder terms. 
If $R$ and $\psi$ could be neglected, then we would have
\[
 |A(t,\xi)|^{2/d}
 =
 \frac{(\eps |\hat{\varphi}(\xi)|)^{2/d}}
    {1  - (2/d) \imagpart \lambda (\eps|\hat{\varphi}(\xi)|)^{2/d} \log t}.
\]
The desired lower bound is characterized by the time 
when this denominator vanishes.

\medskip
\subsection*{Acknowledgments}
The authors are grateful to Professors 
Naoyasu Kita and Hironobu Sasaki for their useful conversations 
on this subject. 
The work of the second author (H.~S.) is supported by 
Grant-in-Aid for Scientific Research (C) (No.~25400161), JSPS.


\end{document}